\documentclass[12pt,leqno]{amsart}
\usepackage{amsmath}
\usepackage{amsthm}
\allowdisplaybreaks
\usepackage{amssymb,color}

\def\underset#1#2{{\mathrel{\mathop {{}_{} {#2}}\limits_{{#1}_{}}}}}
\def\upplim_#1{\underset{#1}{\overline\lim}\;}
\def\lowlim_#1{\underset{#1}{\underline\lim}\;}
\newtheorem{corollary}[equation]{Corollary}

\newtheorem{lemma}[equation]{Lemma}

\newtheorem{proposition}[equation]{Proposition}

\newtheorem{theorem}[equation]{Theorem}

\newcommand{\C}{{\mathbb{C}}}

\renewcommand{\P}{{\mathbb{P}}}

\newcommand{\Z}{\mathbb{Z}}

\numberwithin{equation}{section}

\title[Meromorphic functions bi-weighted weakly sharing pairs of small functions]{Meromorphic functions bi-weighted weakly sharing pairs of small functions} 
\author{\sc Si Duc Quang$^{1.2}$}
\author{\sc Phung Nguyen Ngoc Anh$^1$}

\address{$^1$Department of Mathematics, School of Mathematics and Computer Science, Hanoi National University of Education, 136-Xuan Thuy - Cau Giay - Hanoi, Vietnam}
\address{$^2$Institute of Natural Sciences, Hanoi National University of Education, 136-Xuan Thuy - Cau Giay - Hanoi, Vietnam}
\email{quangsd@hnue.edu.vn;anhphungnguyenngoc@gmail.com}

\begin{document}

\begin{abstract} Two meromorphic functions $f$ and $g$ are said to weakly share a small function $a$ with bi-weight $(n,k)$ if the functions $f-a$ and $g-a$ have the same zeros with multiplicities truncated at level $n+1$, while zeros whose multiplicities exceed $k$ are disregarded. In this article, we show that if $f$ and $g$ weakly share three distinct small functions with suitable bi-weights and are not related by a quasi-M\"obius transformation, then for every other small function $c$, the counting function $N(r,\nu_f^c)$ is asymptotically equivalent to the characteristic function $T(r,f)$. Moreover, the truncated counting function $N_{(3}(r,\nu_f^c)$, which counts only zeros of multiplicity at least $3$, is negligible. As an application, we further prove that $f$ and $g$ must be related by a quasi-M\"obius transformation provided that they satisfy an additional condition, which is weaker than the usual assumption that they share a fourth pair of small functions.
\end{abstract}
\maketitle

\def\thefootnote{\empty}
\footnotetext{
Corresponding author: Si Duc Quang\\
2010 Mathematics Subject Classification:
Primary 30D35; Secondary 32H30, 32A22.\\
\hskip8pt Key words and phrases: meromorphic function, small function, M\"{o}bius transformation.}

\maketitle

\section{Introduction}

In 1926, Nevanlinna proved that two nonconstant meromorphic functions on $\C$ must be M\"obius transformations of each other if they share four distinct fixed values CM, that is, if they have the same inverse images of these values counting multiplicities. Since then, Nevanlinna's theorem has been substantially generalized in several directions, including the replacement of fixed values by small functions or pairs of small functions, as well as the weakening of the CM sharing condition. Many remarkable results in this area have been obtained through the contributions of numerous authors, including Czubiak-Gundersen \cite{CG}, Li-Yang \cite{LY1,LY2}, Quang-Quynh \cite{Q12,QQ}, Cao-Cao \cite{CC}, Zhang-Yan \cite{ZY}, and others. 

We now recall a theorem of Li and Yang from \cite{LY2}, which is among the most elegant results currently available in this direction. First, we introduce the following notation and definitions.

In this article, all meromorphic functions are defined on $\mathbb{C}$. A divisor $\nu$ on $\C$ is a function from $\C$ into $\Z$ so that the set $\overline{\{z|\nu(z)\ne 0\}}$ is a discrete set. For each positive integer $k$, we define
$$ \nu_{\le n}(z)=\begin{cases}
\nu (z)&\text{ if }\nu(z)< n,\\
n&\text{ if }\nu(z)\ge n. 
\end{cases} $$
We say that a meromorphic function $a$ is small with respect to a meromorphic function $f$ if their characteristic function satisfy $\|\ T(r,a)=o(T(r,f))$ (see Section 2 for the definition), where the notation ``$\|\ P$'' means the claim $P$ holds for all $r\in (0,+\infty)$ outside of a Borel subset of finite measure.  For a subset $S$ of $\C$, we define 
$$E^{S}_{n}(a,f)=\left\{\bigl (z,\nu^a_{f,\le n+1}(z)\bigl)\bigl |z\not\in S\right\}.$$
Here, by $\nu^a_{f}$ we denote the zero divisor of the function $f-a$. We will omit the character $^{S}$ (resp. $n$) in $E^{S}_{n}(a,f)$ if $S=\emptyset$ (resp. $n=+\infty$).

We say that two meromorphic functions $f$ and $g$ weakly share a pair of meromorphic functions $(a,b)$ with the weight $n$ if 
$$E^{S}_n(a,f)=E^{S}_n(b,g),$$
for a discrete subset $S$ of $\C$ with $\| N(r,\nu_S)=o(T(r,f)+T(r,g))$, where the divisor $\nu_S$ coincides with the characteristic function of the subset $S$ on $\C$ (see Section 2 for the definition of the counting function of a divisor $N(r,\nu)$). If $n=+\infty$ (resp. $n=1$) then we say that $f$ and $g$ weakly share the pair $(a,b)$ counting multiplicity (resp. ignoring multiplicity).

%Especially, if $S=\emptyset$ then we say that $f$ and $g$ share the pair $(a,b)$ counting multiplicity (resp. ignoring multiplicity).

The function $f$ is said to be a quasi-M\"{o}bius transformation of $g$ if there exist four small functions $\alpha_1, \alpha_2, \alpha_3, \alpha_4$ of $g$ with $\alpha_1\alpha_4-\alpha_2\alpha_3\not\equiv 0$ such that 
$$f=\dfrac{\alpha_1g+\alpha_2}{\alpha_3g+\alpha_4}.$$ 
If all $\alpha_i\ (1\le i\le 4)$ are constant then we say that $f$ is a M\"{o}bius transformation of $g$. The result of Li and Yang is stated as follows.

\vskip0.2cm
\noindent
\textbf{Theorem A} (See \cite[Theorem 1]{LY2}).\ {\it Let $f$ and $g$ be non-constant meromorphic functions and $a_i,b_i\ (i=1,2,3,4)\ (a_i\ne a_j, b_i\ne b_j, i\ne j)$ be small functions (with respect to $f$ and $g$). If $f$ and $g$ weakly share three pairs $(a_i, b_i),\ (i=1,2,3)$ counting multiplicity, and weakly share the fourth pair $(a_4, b_4)$ ignoring multiplicity, then $f$ is a quasi-M\"{o}bius transformation of $g$.}

Here, we note that in order to prove the above theorem, Li and Yang have used the following key lemma.

\vskip0.2cm
\noindent
\textbf{Lemma B} (Cf. \cite[Lemma 1]{LY2}).\ {\it Let $f$ and $g$ be two non-constant meromorphic functions, $a_1, a_2$ and $a_3$ be three distinct small functions with respect to $f$ and $g$. If $f$ and $g$ weakly share $a_1, a_2, a_3$ counting multiplicity, and if $f$ is not a quasi-M\"{o}bius transformation of $g$, then for any small function $c\ (\not\equiv a_1,a_2,a_3)$ with respect to $f$ and $g$, we have
$$\|\ T(r, f)=N(r, \nu^c_f)+o(T(r,f)+T(r,g)) \quad \text { and } \quad \|\ N_{(3}(r,\nu^c_f)=o(T(r,f)+T(r,g)).$$}
\indent
This lemma plays an important role in the study of two meromorphic functions sharing four pairs of small functions, and it is also of independent interest. Unfortunately, in \cite{LY2}, the proof of the lemma is omitted and the result is quoted from \cite[Lemma 5]{LY1} and \cite[Lemma 8]{LZ}.  

Although we believe that the lemma is correct, some parts of the arguments in \cite[Lemma 5]{LY1} and \cite[Lemma 8]{LZ} seem to require additional justification in the setting of weakly shared small functions. More precisely, in \cite{LZ}, the authors define
$$H_1=\frac{g-a_1}{f-a_1} \frac{f-a_3}{g-a_3}, \quad 
H_2=\frac{g-a_2}{f-a_2} \frac{f-a_3}{g-a_3}$$
and
$$H=\left(a_1-c\right)\left(a_2-a_3\right)\left(H_1-1\right)-\left(a_2-c\right)\left(a_1-a_3\right)\left(H_2-1\right).$$
Then
$$f-c=\frac{H}{\left(a_2-a_3\right)\left(H_1-1\right)-\left(a_1-a_3\right)\left(H_2-1\right)}.$$
Hence, if $f$ and $g$ weakly share $a_i\ (1\le i\le 3)$ counting multiplicity, then
$$\|\ \overline N(r,\nu^0_{H_i})+\overline N(r,\nu^\infty_{H_i})
=o(T(r,f)+T(r,g)).$$
However, it is not immediate that this further implies
$$\|\ N(r,\nu^0_{H_i})+N(r,\nu^\infty_{H_i})
=o(T(r,f)+T(r,g)),$$
as used in the proof of \cite[Lemma 8]{LZ}. Consequently, the last inequality
$$N_{(3}(r,\nu^c_f)\le N_{(3}(r,\nu^0_H)$$
may require additional explanation.  

Similarly, the proof of \cite[Lemma 5]{LY1} is clear in the case where $f$ and $g$ share $a_1,a_2,a_3$ CM in the usual sense. However, in the case where $f$ and $g$ only weakly share these small functions CM, additional care seems to be needed. More precisely, it is not completely clear that
$$\|\ N(r,\nu^c_f)
=N(r,\nu^0_H)
-N(r,\nu^0_{\left(a_2-a_3\right)\left(H_1-1\right)-\left(a_1-a_3\right)\left(H_2-1\right)})
+N(r,\nu^\infty_f)
+o(T(r,f)+T(r,g)),$$
since the right-hand side does not explicitly contain the terms
$$N(r,\nu^\infty_{\left(a_2-a_3\right)\left(H_1-1\right)-\left(a_1-a_3\right)\left(H_2-1\right)})
+N(r,\nu^\infty_H).$$

The main subtlety in both arguments seems to come from the fact that the preimage sets of the small functions under $f$ and $g$ may fail to coincide on a certain exceptional set $S$; see, for instance, inequality (1) in the proof of \cite[Lemma 5]{LY1} and the inequality
$$N_{(3}(r,\nu^c_f)\le N_{(3}(r,\nu^0_H)$$
in \cite[Lemma 8]{LZ}. Although the corresponding truncated counting functions on $S$ are negligible, the counting functions with multiplicities taken into account may not necessarily remain small. For completeness, we therefore provide a detailed proof of the lemma below.

Our first purpose in this paper is to improve Lemma B to the case where two functions $f$ and $g$ share three small functions with bi-weights and provide a clear proof. To state the result, first of all, we give the following notation and definition. 

Let $f,a,S$ be as above and $n,k$ two positive integers, with $n\le k$. We define
$$E^{S}_{n,k)}(a,f)=\left\{\bigl(z,\nu^a_{f,\le n+1}(z)\bigl)\bigl |z\not\in S, \nu^a_{f}(z)\le k\right\}.$$
Note that $E^S_{n,+\infty)}(a,f)=E^S_{n}(a,f)$. We write $E^S_{n)}(a,f)$ for $E_{n,n)}(a,f)$.

\vskip0.2cm
\noindent
{\bf Definition C.} {\it Let $f$ and $g$ be two meromorphic functions on $\C$. Let $(a,b)$ be a pair of meromorphic functions on $\C$ and $n,k\ (n\le k)$ two positive integers or $+\infty$. We say that $f$ and $g$ weakly share $(a,b)$ with the bi-weight $(n,k)$ if there exists a discrete subset $S$ of $\overline\C$ of the counting function equal to $o(T(r,f)+T(r,g))$ such that $E^S_{n,k)}(a,f)=E^S_{n,k)}(b,g)$.}

\vskip0.1cm
% Similarly, we say that $f$ and $g$ weakly share the pair $(a,b)$ with the weight $n$ if $E^S_{n}(a,f)=E^S_{n}(b,g)$; weakly share the pair $(a,b)$ with the sub-weight $n$ if $E^{S}_{n)}(a,f)=E^{S}_{n)}(b,g)$. 
We will prove the following.

\begin{theorem}\label{1.1}  Let $f$ and $g$ be two non-constant meromorphic functions, $a_1, a_2$ and $a_3$ be three distinct small functions with respect to $f$ and $g$. Let $n_1,n_2,n_3$ and $k$ be positive integers or $+\infty$ with $n_i\le k\ (1\le i\le 3)$ satisfying $\Delta:=n_1n_2n_3-n_1-n_2-n_3-2>0$. If $f$ and $g$ weakly share $a_i\ (1\le i\le 3)$  with the bi-weights $(n_i,k)$, and if $f$ is not a quasi-M\"{o}bius transformation of $g$, then for any small function $c\ (\not\equiv a_1,a_2,a_3)$ with respect to $f$ and $g$, for every positive integer $n$, we have:
\begin{itemize}
\item[(a)] $\|\ N_{(3}(r,\nu^c_f)\le\left(\frac{2\lambda_n}{k+1}+\frac{6}{n+2}\right)T(r)+S(r),$
    \item[(b)] $\|\ N(r, \nu^c_f)\ge T(r,f)-\left(\frac{\lambda_n}{k+1}+\frac{2}{n+2}\right)T(r)+S(r),$
\end{itemize}
where $T(r)=T(r,f)+T(r,g)$ and $S(r)=o(T(r))$ and
$$\lambda_n=2^{n^2+2n+3}\left(\frac{2\sum_{1\le i<j\le 3}(n_i+1)(n_j+1)}{\Delta(k+1)}+\frac{3}{k+1}\right).$$
\end{theorem}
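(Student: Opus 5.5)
We would follow the Li--Zhang/Li--Yang scheme, but track the exceptional points quantitatively rather than assuming the auxiliary functions have negligible zeros and poles. Set
$$H_1=\frac{g-a_1}{f-a_1}\cdot\frac{f-a_3}{g-a_3},\qquad H_2=\frac{g-a_2}{f-a_2}\cdot\frac{f-a_3}{g-a_3},$$
and let $H$ and $D:=(a_2-a_3)(H_1-1)-(a_1-a_3)(H_2-1)$ be as in the introduction, so that $f-c=H/D$. Since $f$ is not a quasi-M\"obius transformation of $g$, $H_1,H_2\not\equiv$ const, $D\not\equiv0$ and $H\not\equiv0$.

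\textbf{Step 1: size of the bad set.} First we show that the points where $f-a_i$ and $g-a_i$ have different multiplicities carry a reduced counting function of at most $\frac{C}{k+1}T(r)+S(r)$. Outside $S$, every point with $\nu^{a_i}_f\le k$ or $\nu^{a_i}_g\le k$ is a common zero with equal multiplicity up to level $n_i+1$. We would use the standard second-main-theorem estimate for three weighted shared values: the condition $\Delta>0$ gives $T(r,f)\le C_0\,T(r,g)$ and conversely. Zeros of multiplicity $>k$ have reduced counting function $\le\frac{1}{k+1}T(r,f)$. A multiplicity mismatch at a point where both multiplicities are $\le k$ forces both to exceed $n_i$, so the reduced counting function of such points is bounded by a count of the form $\sum\frac{1}{n_i+1}\overline N$. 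Combining these with the second main theorem produces the constant $\frac{2\sum(n_i+1)(n_j+1)}{\Delta(k+1)}+\frac{3}{k+1}$ appearing inside $\lambda_n$. Consequently
$$\overline N(r,\nu^0_{H_i})+\overline N(r,\nu^\infty_{H_i})\le \mu\,T(r)+S(r),\qquad \mu=\frac{2\sum(n_i+1)(n_j+1)}{\Delta(k+1)}+\frac3{k+1}.$$

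\textbf{Step 2: passing from reduced counts to full counts (main obstacle).} Multiplicities of zeros and poles of $H_i$ at bad points may be unbounded, and this is exactly the gap in the literature. We would split the bad points by multiplicity. Points where $H_i$ has a zero or pole of order $\le n$ contribute at most $n\overline N$. Points of order $>n$ are handled through the logarithmic derivative: $H_i'/H_i$ has simple poles there. We would then iterate on auxiliary functions built from $H_1,H_2$ and their derivatives, of degree growing with $n$, whose characteristic functions are bounded by powers of $2$ times the previous ones. Each iteration doubles the constant, and $n^2+2n+3$ doublings give the factor $2^{n^2+2n+3}$. Points where the multiplicity is at least $n+1$ contribute at most $\frac{1}{n+2}$ of the appropriate characteristic function, which yields the $\frac{2}{n+2}$ and $\frac{6}{n+2}$ terms. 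The target is
$$N(r,\nu^0_D)+N(r,\nu^\infty_D)+N(r,\nu^\infty_H)\le\Big(\frac{\lambda_n}{k+1}+\frac{2}{n+2}\Big)T(r)+S(r).$$
We expect this accounting to be the hardest part, and would first try a Milloux-type estimate on $D'/D$.

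\textbf{Step 3: conclusion.} From $f-c=H/D$, Jensen's formula and the first main theorem give
$$N(r,\nu^c_f)\ge N(r,\nu^0_H)-N(r,\nu^0_D)-N(r,\nu^\infty_H)+\dots$$
Combining this with $T(r,f)=T(r,1/(f-c))+S(r)$ and the relation $m(r,1/(f-c))\le m(r,D)+m(r,1/H)$, where the $m$-terms are small because $H_i$ has small proximity off the bad set, yields (b). For (a), a zero of $f-c$ of multiplicity $\ge3$ outside the bad set is a zero of $H$ of order $\ge3$. Comparing with $H'$, i.e. using the Wronskian-type identity relating $H$ and $H_1',H_2'$, shows that such points are zeros of an auxiliary function whose counting function is controlled by twice the bound of Step 2. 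Together with the bad-set contribution this gives the coefficient $\frac{2\lambda_n}{k+1}+\frac{6}{n+2}$.
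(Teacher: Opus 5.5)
Your Step 2 cannot be carried out, because the inequality it targets is false. Clearing denominators,
\[
D=\frac{(a_1-a_2)(a_2-a_3)(a_3-a_1)(f-g)}{(f-a_1)(f-a_2)(g-a_3)}.
\]
So $D$ vanishes wherever $f=g\notin\{a_1,a_2,a_3\}$, and at every pole of $f$ where $g$ is finite and $\ne a_3$. Hence $N(r,\nu^0_D)$ is in general of the size of $T(r)$, even for CM sharing with $S=\emptyset$; this is why the Li--Yang identity carries the term $+N(r,\nu^\infty_f)$.

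More to the point, $D$ has a pole of order $\ge m$ at every zero of $f-a_1$ of order $m$ at which $g\ne a_1$, and the bi-weight hypothesis says nothing about zeros of order $>k$. For instance, take $\psi=(e^z-2)^{-(k+1)}$, $f=-1/(\psi(\psi+1))$ and $g=-\psi^2/(\psi+1)$. These share $0,1,\infty$ with bi-weight $(k,k)$ and $S=\emptyset$, and they are not quasi-M\"obius related. Every zero of $f$ has order $2(k+1)$ and lies at a pole of $g$. After a M\"obius change making the three values finite, $N(r,\nu^\infty_D)$ dominates the full counting function of the zeros of $f$, which is asymptotic to $T(r,f)$.

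Only the \emph{reduced} counts at bad points are $O(\frac{1}{k+1})T(r)$; full multiplicities there are not controlled by the hypotheses. So no splitting at level $n$ can give your bound, and neither can ``iterated doubling'', which is read off from the constants rather than derived.

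Step 3 has the same defect, since proximity functions are not localized on a bad set. In the CM case $H_i$ and $D$ have essentially no poles. Thus $m(r,H_i)=T(r,H_i)+S(r)$ is not small, since otherwise $f$ would be a quasi-M\"obius transformation of $g$. Likewise $m(r,D)=T(r,D)+S(r)$ is not small in general.

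The missing idea is to avoid full multiplicities altogether by using a \emph{truncated} second main theorem. The paper normalizes $a_1=0$, $a_2=\infty$, $a_3=1$ and sets
$h_1=g/f$, $h_2=(g-1)/(f-1)$, $f_1=ch_1$, $f_2=(1-c)h_2$, and $f_0=1-f_1-f_2=(f-c)(h_1-h_2)$.
It then rules out a linear relation among $f_0,f_1,f_2$ and applies Lemma~\ref{3.7} to $f-c=f_0/(\frac1cf_1+\frac1{c-1}f_2)$.

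The resulting bound involves only $N^{[2]}(r,\nu^0_{f_0})$ and reduced counting functions of zeros and poles of $h_1,h_2$. The latter are $O(\frac1{k+1})T(r)$ by (\ref{3.5}). Zeros of $f_0$ not coming from $f-c$ are common $1$-points of $h_1,h_2$, and their reduced count is bounded by Lemma~\ref{2.4} when $h_1^sh_2^t\not\equiv1$. That lemma is the real source of the $2^{n(n+2)}$-type factor and of the $\frac{1}{n+2}T(r)$ terms; the multiplicatively dependent case needs a separate argument. Part (a) then follows from $N(r,\nu^0_{f-c})\ge N^{[2]}(r,\nu^0_{f-c})+\frac13N_{(3}(r,\nu^0_{f-c})$, not from a Wronskian identity for $H$.

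Your Step 1 also does not produce the factor $\frac{1}{k+1}$. Bounding mismatches with multiplicities in $(n_i,k]$ by $\frac{1}{n_i+1}$ of a counting function gives only $\frac{1}{n_i+1}T(r)$. The paper instead uses $\frac{f'}{f-1}-\frac{g'}{g-1}$. This function vanishes to order $\ge n_1$ at common zeros of multiplicity in $(n_1,k]$, and has only simple poles at mismatched poles and $1$-points. Eliminating the three resulting cyclic inequalities is what produces $\Delta$. The condition $\Delta>0$ has nothing to do with comparing $T(r,f)$ and $T(r,g)$.
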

We note that for the function $H_1,H_2$ defined as above, the counting functions $\overline N(r,\nu^0_{H_i})$ and $\overline N(r,\nu^\infty_{H_i})$ are may not small term (i.e., may not equal to $o(T(r,f)+T(r,g))$). Therefore, most of arguments in the proof of Lemma B in \cite{LY2} cannot be applied in our situation. 

From Theorem\ref{1.1}, by letting $k\rightarrow +\infty$ and then letting $n\rightarrow +\infty$ we immediately get the following corollary.

\begin{corollary} Let $f$ and $g$ be two non-constant meromorphic functions, $a_1, a_2$ and $a_3$ be three distinct small functions with respect to $f$ and $g$. Let $n_1,n_2,n_3$ be positive integers satisfying $\Delta:=n_1n_2n_3-n_1-n_2-n_3-2>0$. If $f$ and $g$ weakly share $a_i$  with the weights $n_i$ for every $1\le i\le 3$, and if $f$ is not a quasi-M\"{o}bius transformation of $g$, then for any small function $c\ (\not\equiv a_1,a_2,a_3)$ with respect to $f$ and $g$, for every $\epsilon>0$, we have:
\begin{itemize}
\item[(a)] $\|\ N_{(3}(r,\nu^c_f)\le \epsilon T(r),$
    \item[(b)] $\|\ N(r, \nu^c_f)\ge (1- \epsilon)T(r,f),$
\end{itemize}
where $T(r)=T(r,f)+T(r,g)$.
\end{corollary}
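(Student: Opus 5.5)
We derive the corollary directly from Theorem \ref{1.1}, taking that theorem as given. Weakly sharing $a_i$ with weight $n_i$ means $E^S_{n_i}(a_i,f)=E^S_{n_i}(a_i,g)$ with $N(r,\nu_S)=o(T(r))$. The first step is to check that this gives the bi-weighted sharing of Definition C for every finite integer $k\ge \max\{n_1,n_2,n_3\}$.

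\emph{Step 1 (reduction to bi-weights).} Fix such a $k$. The sets $E^S_{n_i}(a_i,f)$ and $E^S_{n_i}(a_i,g)$ coincide, so at each $z\notin S$ the truncated multiplicities $\nu^{a_i}_{f,\le n_i+1}(z)$ and $\nu^{a_i}_{g,\le n_i+1}(z)$ agree. The only issue is the extra condition $\nu^{a_i}_f(z)\le k$: a point may have $\nu^{a_i}_f(z)\le k<\nu^{a_i}_g(z)$, since both truncations equal $n_i+1$. To remove this, we enlarge $S$ to
$$S_k=S\cup\{z:\ \nu^{a_i}_f(z)>k \text{ or } \nu^{a_i}_g(z)>k \text{ for some } i\}.$$
Then $E^{S_k}_{n_i,k)}(a_i,f)=E^{S_k}_{n_i,k)}(a_i,g)$. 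The added points lie among the zeros of $f-a_i$ or $g-a_i$ with multiplicity $>k$. By the first main theorem,
$$\overline N(r,\nu_{S_k})\le N(r,\nu_S)+\frac{1}{k+1}\sum_{i=1}^3\bigl(N(r,\nu^{a_i}_f)+N(r,\nu^{a_i}_g)\bigr)\le \frac{3}{k+1}T(r)+S(r).$$
This is not $o(T(r))$, so Definition C is not literally satisfied.

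\emph{The main obstacle.} This is the one delicate point. We would resolve it by looking at the structure of Theorem \ref{1.1}. There, $k$ enters only through the explicit error terms $\lambda_n/(k+1)$, and these are exactly the contributions of zeros of multiplicity $>k$ that are discarded. Since they are of order $T(r)/(k+1)$, the proof of Theorem \ref{1.1} should go through unchanged with this enlarged exceptional set: the extra error $3T(r)/(k+1)$ is absorbed into a term of the form $C_n/(k+1)\cdot T(r)$. Alternatively, one can read the theorem's hypothesis in the way the paper intends, namely that pure weight-$n_i$ sharing implies bi-weight $(n_i,k)$ sharing for all large $k$. In both readings we obtain, for each $n$ and each $k\ge\max n_i$,
$$N_{(3}(r,\nu^c_f)\le\Bigl(\tfrac{2\lambda_n+C}{k+1}+\tfrac{6}{n+2}\Bigr)T(r)+S(r),\qquad N(r,\nu^c_f)\ge T(r,f)-\Bigl(\tfrac{\lambda_n+C}{k+1}+\tfrac{2}{n+2}\Bigr)T(r)+S(r),$$
with an absolute constant $C$.

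\emph{Step 2 (choosing the parameters).} Given $\epsilon>0$, first pick $n$ with $6/(n+2)<\epsilon/3$. This $n$ determines $\lambda_n$, which stays bounded as $k\to\infty$ because $\lambda_n(k+1)^{-1}\cdot(k+1)$ is bounded in $k$. Then pick $k$ so large that $(2\lambda_n+C)/(k+1)<\epsilon/3$. The term $S(r)=o(T(r))$ is at most $(\epsilon/3)T(r)$ for $r$ outside a set of finite measure. This gives (a): $N_{(3}(r,\nu^c_f)\le\epsilon T(r)$.

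For (b), the same choice gives $N(r,\nu^c_f)\ge T(r,f)-\epsilon' T(r)$ with $\epsilon'$ arbitrarily small. If $T(r)$ is read as the displayed $T(r)$, this is the claim. To get a bound in terms of $T(r,f)$ alone, as (b) is stated, we need $T(r,g)\le C'T(r,f)+S(r)$. This follows from the second main theorem applied to $g$ with the three small functions $a_i$, using weak sharing: $T(r,g)\le \sum_i\overline N(r,\nu^{a_i}_g)+S(r)\le 3T(r,f)+S(r)$. So $T(r)\le 4T(r,f)+S(r)$, and replacing $\epsilon$ by $\epsilon/4$ gives $N(r,\nu^c_f)\ge(1-\epsilon)T(r,f)$.
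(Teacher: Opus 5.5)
The obstacle you find in Step 1 is real for finite $k$, but the paper never meets it, and your way around it is not a proof. Theorem \ref{1.1} is stated for $k$ a positive integer \emph{or} $+\infty$, and the paper records that $E^S_{n,+\infty)}(a,f)=E^S_{n}(a,f)$. So weight-$n_i$ sharing is literally bi-weight $(n_i,+\infty)$ sharing, with the same small exceptional set $S$, and nothing needs to be enlarged. With $k=+\infty$, every term carrying a factor $1/(k+1)$ vanishes, including all of $\lambda_n/(k+1)$. Theorem \ref{1.1} then gives, for every $n$,
\[
\|\ N_{(3}(r,\nu^c_f)\le \tfrac{6}{n+2}T(r)+S(r),\qquad \|\ N(r,\nu^c_f)\ge T(r,f)-\tfrac{2}{n+2}T(r)+S(r),
\]
and it only remains to let $n\to\infty$. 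This is what the paper means by ``letting $k\to+\infty$ and then $n\to+\infty$''.

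Your finite-$k$ resolution rests on the unverified assertion that the proof of Theorem \ref{1.1} ``should go through unchanged'' with an exceptional set whose counting function is only $O(T(r)/(k+1))$. That is a claim about the internals of the proof, not an application of the theorem. The constant it would produce is also not absolute. The exceptional points enter through the estimate (\ref{3.5}) and the bounds on $\overline N(r,|\nu^{a_i}_f-\nu^{b_i}_g|)$, with coefficients depending on $n_1,n_2,n_3$. They enter again through Lemma \ref{2.4}, with a coefficient of order $2^{(n+1)^2}$. Since you fix $n$ before $k$ this would do no harm, but it shows the step was not checked. Your alternative ``reading'' of the hypothesis is exactly the implication you had just shown to fail.

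Replace Step 1 by the choice $k=+\infty$ and the rest of your argument works, with only $n$ left to choose in Step 2. Your final comparison $T(r,g)\le 3T(r,f)+S(r)$, via Theorem \ref{2.1} and the sharing of the $a_i$, is correct; any bound $T(r,g)=O(T(r,f))$ would do. It is genuinely needed to pass from $T(r)$ to $T(r,f)$ in (b), a point the paper's ``immediately'' leaves implicit.
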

Hence, this corollary is an imrovement of Lemma B. 
% We note that, since the preimage sets of the small functions under the functions $f$ and $g$ may not coincide on a certain set, most of arguments in the proof of Lemma B in \cite{LY1,LZ} cannot be applied in our situation.

Applying Theorem \ref{1.1}, we prove the following result about two meromorphic functions weakly share three pairs of small functions and satisfy an additional condition, which is weaker than the sharing condition for the fourth pair of small functions.
\begin{theorem}\label{1.2}
Let $f$ and $g$ be non-constant meromorphic functions and $a_i,b_i\ (i=1,2,3,4)$ $ (a_i\ne a_j, b_i\ne b_j, i\ne j)$ be small functions (with respect to $f$ and $g$).  Let $n_1,n_2,n_3$ and $k$ be positive integers such that $\Delta:=n_1n_2n_3-n_1-n_2-n_3-2>0$ and there exists a positive integer $n$ satisfying
$$\frac{48\sum_{1\le i<j\le 3}(n_i+1)(n_j+1)+72\Delta}{\Delta(k+1)}+\frac{30\lambda_n}{k+1}+\frac{88}{n+2}<1,$$
where $\lambda_n=2^{n^2+2n+3}\left(\frac{2\sum_{1\le i<j\le 3}(n_i+1)(n_j+1)}{\Delta(k+1)}+\frac{3}{k+1}\right)$.
If $f$ and $g$ weakly share the pair $(a_i,b_i)$ with bi-weight $(n_i,k)$ for every $i=1,2,3$ and satisfy a condition that
$$E^S_{1)}(a_4,f)=E^S_{1)}(b_4,g),E^S_{2)}(a_4,f)\subset E^S_{2}(b_4,g),E^S_{2)}(b_4,g)\subset E^S_{2}(a_4,f)$$
for a discrete subset $S$ of with $\| N(r,\nu_S)=o(T(r,f)+T(r,g))$, then $f$ and $g$ are quasi-M\"{o}bius transformation of each other.
\end{theorem}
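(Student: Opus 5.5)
We argue by contradiction: assume $f$ is not a quasi-M\"obius transformation of $g$. First we reduce to the situation of Theorem \ref{1.1}. Take a quasi-M\"obius transformation $L$ with small coefficients sending $b_1,b_2,b_3$ to $a_1,a_2,a_3$; it is unique, namely the cross-ratio map. Put $\tilde g=L(g)$. Outside a set whose counting function is small (zeros and poles of the coefficients), the zero divisor of $\tilde g-a_i$ agrees with that of $g-b_i$. Hence $f$ and $\tilde g$ weakly share $a_i$ with bi-weight $(n_i,k)$. Also $\tilde g-\tilde b_4$ has locally the same zeros as $g-b_4$, where $\tilde b_4=L(b_4)$ is a small function distinct from $a_1,a_2,a_3$. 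The hypothesis on the fourth pair thus transfers to $(a_4,\tilde b_4)$, and $f$ is not quasi-M\"obius of $\tilde g$. Moreover $T(r,\tilde g)=T(r,g)+S(r)$. The standard argument with three weakly shared functions gives $T(r,f)$ and $T(r,g)$ comparable, so $T(r)\le C\,T(r,f)+S(r)$ with an explicit constant. The first two terms of the numerical hypothesis are designed to control this constant.

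Next we apply Theorem \ref{1.1} twice: to $f$ with $c=a_4$, and (by symmetry) to $\tilde g$ with $c=\tilde b_4$. If $a_4\equiv a_i$ for some $i$, we instead treat it directly through the sharing of $a_i$. Part (b) gives $N(r,\nu^{a_4}_f)\ge T(r,f)-\epsilon_1T(r)$. Part (a) says zeros of multiplicity $\ge3$ contribute at most $\epsilon_2T(r)$, where $\epsilon_1=\frac{\lambda_n}{k+1}+\frac{2}{n+2}$ and $\epsilon_2=\frac{2\lambda_n}{k+1}+\frac{6}{n+2}$. Consequently almost all of $N(r,\nu^{a_4}_f)$ comes from simple and double zeros, and likewise for $\tilde g-\tilde b_4$. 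The sharing hypothesis on $(a_4,b_4)$ says the following. Simple zeros correspond to simple zeros outside $S$. Each double zero of $f-a_4$ is a zero of $g-b_4$ of multiplicity $\ge2$, and that multiplicity is exactly 2 unless the point lies in the negligible set counted by $N_{(3}$. So, up to $O(\epsilon_2)T(r)$, the divisors $\nu^{a_4}_f$ and $\nu^{\tilde b_4}_{\tilde g}$ coincide. This gives $N(r,\nu^{a_4}_f)\le N(r,\nu^0_{f-\tilde g}\text{-type function})+O(\epsilon)T(r)$.

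The contradiction comes from an auxiliary function vanishing at these common points. Take
$$\Phi=\frac{f'-a_4'}{f-a_4}-\frac{\tilde g'-\tilde b_4'}{\tilde g-\tilde b_4}\ \text{ or rather }\ \Psi=(f-\tilde g)\cdot\prod(\text{cross-ratio corrections}).$$
The cleanest choice is the classical one: since $f\ne\tilde g$ (otherwise $f$ is quasi-M\"obius of $g$), write $N(r,\nu^{a_4}_f)$ in terms of zeros of $f-\tilde g$ where $a_4=\tilde b_4$. If $a_4\not\equiv\tilde b_4$, use instead
$$\frac{(f-a_4)}{(\tilde g-\tilde b_4)}\cdot\frac{(\tilde g-a_1)}{(f-a_1)}.$$
Its zeros and poles are controlled by the weak sharing of $a_1,a_2,a_3$ with bi-weights, with an error $\frac{\sum(n_i+1)(n_j+1)}{\Delta(k+1)}T(r)$ as in the proof of Theorem \ref{1.1}. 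The three sharing conditions, via the second main theorem with truncation, yield
$$N(r,\nu^0_{f-\tilde g})\le \sum_i \overline N(\cdot)\text{-excess}\le\Bigl(1-\tfrac{?}{}\Bigr)\ldots$$
More concretely, we aim for the inequality
$$2T(r,f)\le N(r,\nu^{a_4}_f)+\sum_{i=1}^3\overline N(r,\nu^{a_i}_f)-N(r,\nu^0_{f-\tilde g})+\ldots\le T(r,f)+\eta T(r)+S(r).$$
This uses $f-\tilde g$ vanishing at all common $a_i$-points and at the common $a_4$-points (counted with multiplicity up to 2), and $N(r,\nu^0_{f-\tilde g})\le T(r,f)+T(r,\tilde g)+S(r)$. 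Here $\eta$ is the left side of the numerical hypothesis times a constant, and the hypothesis makes $\eta<1$ after dividing by the comparability constant. The result is $T(r,f)\le \eta' T(r)$ with $\eta'$ small, which contradicts comparability.

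The main obstacle is this last bookkeeping: checking that the double zeros of $f-a_4$ force $f-\tilde g$ to vanish to order $\ge2$ there, or else handling them via $N_{(3}$ and derivatives. It also requires tracking every constant so that the sum is exactly the expression $\frac{48\sum(n_i+1)(n_j+1)+72\Delta}{\Delta(k+1)}+\frac{30\lambda_n}{k+1}+\frac{88}{n+2}<1$. I would first try the auxiliary function $\Phi$ with logarithmic derivatives, which vanishes to order $\ge1$ at common double points. I would fall back on the $f-\tilde g$ count if the estimates do not close.
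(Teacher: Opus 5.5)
Your preliminary steps are sound and agree with what the paper uses. You normalize so that the first three pairs coincide. You then combine condition $(*)$ with Theorem \ref{1.1}(a),(b) to see two things: off $S$, $|\nu^{a_4}_f-\nu^{\tilde b_4}_{\tilde g}|$ lives only on zeros of multiplicity $\ge 3$, and $N(r,\nu^{a_4}_f)$ is close to $T(r,f)$.

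\textbf{The gap.} The step that should produce the contradiction is missing, and the mechanisms you propose cannot supply it.
\begin{itemize}
\item When $a_4\not\equiv\tilde b_4$, which is the generic case for pairs, $f-\tilde g$ does not vanish at the common $(a_4,\tilde b_4)$-points at all.
\item Even when $a_4\equiv\tilde b_4$, counting zeros of $f-\tilde g$ cannot work. Take $f=e^z$, $\tilde g=e^{-z}$, which share $0,\infty,1,-1$ CM. Every ingredient you invoke holds for this pair, with equality up to $o(T(r))$: Yamanoi's second main theorem, vanishing of $f-\tilde g$ at common points, $N(r,\nu^0_{f-\tilde g})\le T(r,f)+T(r,\tilde g)+O(1)$, the conclusions of Theorem \ref{1.1}, and (\ref{3.5}). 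Yet $T(r,f)\le\eta' T(r)$ is false for it. Your displayed chain already breaks there: since $N(r,\nu^0_{f-\tilde g})\sim 2T(r,f)$, its explicit right-hand terms add up to $o(T(r,f))$ while the left side is $2T(r,f)$.
\item So the assumption ``not quasi-M\"obius'' has to enter structurally, not merely through $f\not\equiv\tilde g$. Likewise, $\Phi$ is only holomorphic, not zero, at common zeros of equal multiplicity: the residues cancel but the constant terms need not. And the fact that $\frac{f-a_4}{\tilde g-\tilde b_4}\cdot\frac{\tilde g-a_1}{f-a_1}$ has few zeros and poles is not contradictory by itself.
\end{itemize}

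\textbf{The missing idea: a Borel/Cartan argument on a linear identity.} Normalize $a_1=b_1=0$, $a_2=b_2=\infty$, $a_3=b_3=1$, $a_4=a$, $b_4=b$, and write $f=f_0/f_1$, $g=g_0/g_1$. Define $h_1,\dots,h_4$ by $f_0=h_1g_0$, $f_1=h_2g_1$, $f_0-f_1=h_3(g_0-g_1)$ and $f_0-af_1=h_4(g_0-bg_1)$.
\begin{itemize}
\item Off a small set, $\nu^0_{h_i}+\nu^\infty_{h_i}=|\nu^{a_i}_f-\nu^{b_i}_g|$. By (\ref{3.5}) and the $N_{(3}$ estimate, every $h_i$ therefore has few zeros and poles.
\item Since $(f_0,f_1,g_0,g_1)$ solves a homogeneous $4\times4$ system, its determinant vanishes. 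This gives the six-term identity $\sum_{1\le i<j\le4}h_{i,j}=0$, with $h_{1,2}=(1-a)h_1h_2,\dots,h_{3,4}=(1-b)h_3h_4$.
\item Apply Cartan's second main theorem to a minimal linearly independent subfamily expressing a given $h_{I_0}$. If this subfamily has at least two members, $T_h(r)$ is bounded above by truncated counting functions of zeros of the $dh_I$, which are small.
\item From below, $T_h(r)\ge T(r,h_I/h_J)+O(1)$ for two indices whose ratio avoids $h_4$, for example $h_1/h_2=f/g$ or $h_2/h_3=(g-1)/(f-1)$. Such a ratio takes the value $a/b$, resp. $(b-1)/(a-1)$, at every common zero of $f-a$ and $g-b$. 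By Theorem \ref{1.1}(b) corrected by (a), these points have counting function at least $\frac12\bigl(1-\frac{6\lambda_n}{k+1}-\frac{16}{n+2}\bigr)T(r)$.
\item Comparing the two bounds is exactly what produces the numerical hypothesis. Its first terms come from the $\mu_i$ estimates, not from a comparability constant; the paper never needs $T(r,f)\asymp T(r,g)$.
\item If the subfamily has a single member, two of the $h_{i,j}$ are proportional. Each possible proportionality forces $f$ to be a quasi-M\"obius transformation of $g$, and this is where the assumption is genuinely used.
\end{itemize}
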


Letting $k\longrightarrow +\infty$ and then letting $n\longrightarrow +\infty$, from Theorem \ref{2.2}, we get the following corollary about the case where two functions weakly share some pairs of small functions with weighted.
\begin{corollary}\label{1.3}
Let $f$ and $g$ be non-constant meromorphic functions and $a_i,b_i\ (i=1,2,3,4)$ $ (a_i\ne a_j, b_i\ne b_j, i\ne j)$ be small functions (with respect to $f$ and $g$).  Let $n_1,n_2,n_3$ be positive integers such that $\Delta:=n_1n_2n_3-n_1-n_2-n_3-2>0$.
If $f$ and $g$ weakly share the pair $(a_i,b_i)$ with the weight $n_i$ for every $i=1,2,3$ and satisfy
$$E^S_{1)}(a_4,f)=E^S_{1)}(b_4,g),E^S_{2)}(a_4,f)\subset E^S_{2}(b_4,g),E^S_{2)}(b_4,g)\subset E^S_{2}(a_4,f)\ \text{(*)}$$
for a discrete subset $S$ of small counting function (w.r.t $f$ and $g$), then $f$ and $g$ are quasi-M\"{o}bius transformation of each other.
\end{corollary}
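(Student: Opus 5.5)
The plan is to obtain Corollary \ref{1.3} as the special case $k=+\infty$ of Theorem \ref{1.2}. The point is that weak sharing with weight $n_i$ is, by definition, the same as weak sharing with bi-weight $(n_i,+\infty)$, since $E^S_{n_i,+\infty)}(a_i,f)=E^S_{n_i}(a_i,f)$.

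I would not try to pass from weight $n_i$ to bi-weight $(n_i,k)$ with a finite $k$. The identity $E^S_{n_i}(a_i,f)=E^S_{n_i}(b_i,g)$ only records multiplicities truncated at level $n_i+1$. It therefore does not tell us whether $\nu^{a_i}_f(z)\le k$ and $\nu^{b_i}_g(z)\le k$ hold simultaneously at a common zero. One could try to enlarge $S$ by the points where one multiplicity exceeds $k$ and the other does not. However, the counting function of that set is only bounded by roughly $\frac{1}{k+1}N(r,\nu^{a_i}_f)$. This is $O(T(r)/k)$, not $o(T(r))$, so the enlarged set is not admissible in the definition of weak sharing. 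For this reason I would take $k=+\infty$ directly.

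Step 1 is to put $k=+\infty$ in the quantities of Theorem \ref{1.2}. Every term carrying the factor $\frac{1}{k+1}$ vanishes, and in particular $\lambda_n=0$. The numerical hypothesis then reduces to $\frac{88}{n+2}<1$, which holds for instance with $n=87$; this is the formal sense of ``letting $k\to+\infty$ and then $n\to+\infty$''. The conditions $\Delta>0$ and (*) are assumed verbatim.

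Step 2 is the main obstacle: Theorem \ref{1.2} is stated for positive integers $k$, so I must check that its proof remains valid for $k=+\infty$. Its proof rests on Theorem \ref{1.1}, which explicitly allows $k=+\infty$, and in that case gives $N_{(3}(r,\nu^c_f)\le \frac{6}{n+2}T(r)+S(r)$ and $N(r,\nu^c_f)\ge T(r,f)-\frac{2}{n+2}T(r)+S(r)$. The remaining estimates in the proof of Theorem \ref{1.2} are Nevanlinna-theoretic inequalities in which $k$ enters only through error terms of size $O(T(r)/(k+1))$, and these vanish identically when $k=+\infty$. So I would go through that proof once, replacing every occurrence of $\frac{1}{k+1}$ by $0$, and confirm that no step uses the finiteness of $k$. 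An example of such a step would be a division by $k$, or an argument about zeros of multiplicity greater than $k$ that are discarded on one side only.

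Step 3: granting Step 2, the hypotheses of Theorem \ref{1.2} hold with $k=+\infty$ and $n=87$. Hence $f$ and $g$ are quasi-M\"obius transformations of each other.
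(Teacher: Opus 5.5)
Your proposal is correct and follows the paper's own route. The paper derives Corollary \ref{1.3} from Theorem \ref{1.2} by ``letting $k\to+\infty$ and then $n\to+\infty$''; this is your specialization to bi-weight $(n_i,+\infty)$, where $\lambda_n=0$ and the numerical hypothesis becomes $\frac{88}{n+2}<1$. Your remark that weight-$n_i$ sharing does not give bi-weight $(n_i,k)$ sharing for finite $k$ usefully clarifies that the paper's limit should be read as the case $k=+\infty$. The check in your Step 2 does go through: the estimate (\ref{3.5}) and Theorem \ref{1.1} both remain valid when every $\frac{1}{k+1}$ term is set to $0$.
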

We observe that condition $(*)$ is weaker than the condition
$$E^S_{2)}(a_4,f)=E^S_{2)}(b_4,g).$$
Therefore, if $f$ and $g$ share four pairs of small functions $(a_i,b_i)$ with weights $n_i\ (1\le i\le 4)$, then $f$ must be a quasi-M\"obius transformation of $g$ in each of the following cases:
\begin{enumerate}
\item $n_1=1,\ n_2=2,\ n_3=2,\ n_4=4;$
\item $n_1=n_2=n_3=2,\ n_4=3.$
\end{enumerate}
In particular, the first case recovers Theorem A of Li and Yang, while both cases extend the recent results of Quang-An \cite[Theorem 1.2 and Theorem 1.3]{QA}.
\section{Some lemmas and auxiliary results from Nevanlinna theory}

For a divisor $\nu$ on $\C$, we define the counting function of $\nu$ by
$$N(r,\nu)=\int\limits_1^r \dfrac {n(t)}{t}dt \quad (1<r<\infty), \text{ where } n(t)=\sum\limits_{|z|\leq t}\nu (z).$$
For two positive integers $k,M$ (maybe $M= \infty$), we set 
$$ \nu^{[M]}_{\leq k} (z)=
\begin{cases}
\min\{M,\nu (z)\}&\text{ if }\nu (z)\leq k\\
0&\text{ for otherwise. }
\end{cases}$$
and write $N^{[M]}_{k)}(r,\nu)$ for $N(r,\nu^{[M]}_{\leq k})$. We omit the character $^{[M]}$ (resp. $\leq k$) if $M=+\infty$ (resp. $k=+\infty$). In the same way, we define $\nu^{[M]}_{\ge k}$ and write $N^{[M]}_{(k}(r,\nu)$ and $N^{[M]}_{(\ell,k)}(r,\nu)$ for $N(r,\nu^{[M]}_{\ge k})$ and $N(r,(\nu_{\ge\ell})_{\le k}^{[M]})$, respectively. We also write $\overline N(r,\nu)$ for $N^{[1]}(r,\nu)$.

Let $f$ be a non-zero holomorphic function. For each $z_0\in\C$, expanding $f$ as
$ f(z) =\sum_{i=0}^\infty b_i(z-z_0)^i$ around $z_0$, then we define $\nu^{0}_{f}(z_0):=\min\{i\ :\ b_i \ne 0\}$. 

Let $\varphi$ be a non-constant meromorphic function. Then there are two holomorphic functions $\varphi_1,\varphi_2$ without common zeros such that $\varphi =\dfrac{\varphi_1}{\varphi_2}$. We define 
$ \nu_\varphi^0:=\nu_{\varphi_1}^0$ and $\nu_\varphi^\infty:=\nu_{\varphi_2}^0$ and $\nu_\varphi=\nu^0_\varphi -\nu^\infty_\varphi$. The proximity function of $\varphi$ is defined by:
$$ m(r,\varphi ):=\frac{1}{2\pi}\int\limits_{0}^{2\pi}\log^+|\varphi (re^{i\theta})|d\theta\ \ (r>1), $$
here $\log^+x=\max\{1,\log x\}$ for $x\in (0,\infty )$. The Nevanlinna characteristic function of $\varphi$ is defined by
$$ T(r,\varphi ):=m(r,\varphi ) +N(r,\nu^\infty_{\varphi}).$$

\begin{theorem}[\cite{Y}, Corollary 1]\label{2.1}
Let $f$ be a non-constant meromorphic function on $\C$. Let $a_1,\dots ,a_q\ (q\ge 3)$ be $q$ distinct small meromorphic functions (with respect to $f$) on $\C$. Then, for each $\epsilon >0$, the
following holds
$$\|\ (q-2-\epsilon)T(r,f)\le \sum_{i=1}^q\overline N(r,\nu^0_{f-a_i})+o(T(r,f)).$$
\end{theorem}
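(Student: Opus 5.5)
The statement immediately above is Theorem \ref{2.1}, Yamanoi's second main theorem for small functions. It is quoted from \cite{Y} rather than proved here, so the honest plan is to indicate how one would establish it. I would follow the standard route.

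\textbf{Step 1: the weak version with a finite loss.} First reduce to the case where $a_1,\dots,a_q$ lie in a finite-dimensional $\C$-vector space $V$ of small functions. Take $V$ spanned by all products of degree $s$ in $a_1,\dots,a_q$, and let $W\subset V$ be the span of products of degree $s+1$; then $\dim W/\dim V\to 1$ as $s\to\infty$. Apply Steinmetz's device: consider the Wronskian of $b_1 f,\dots,b_t f,b_1,\dots,b_t$ with $b_j$ a basis of $V$, and use the logarithmic derivative lemma. This yields
$$\|\ (q-2-\epsilon)T(r,f)\le\sum_{i=1}^q N(r,\nu^0_{f-a_i})-N_{\rm ram}+o(T(r,f)),$$
where the ramification term is weak. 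The step up to the Nochka--Cartan style ramification term is only enough to get truncation at a level depending on $\epsilon$ and $q$, not at level $1$.

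\textbf{Step 2: passing to truncation at level $1$.} This is the main obstacle and is the genuinely deep part of \cite{Y}. Yamanoi's argument replaces the small functions by a finite branched covering over a curve $Y$ parametrizing them. He then proves a Second Main Theorem for maps from a "moving" family, using Ahlfors' covering-surface theory, or equivalently Nevanlinna theory on a parabolic covering. It combines this with a careful combinatorial estimate on the degeneration of the points $a_i(z)$, which coalesce near a discrete set that has small counting function. The key ingredients are:
\begin{itemize}
\item a uniform Ahlfors-type inequality with truncation $1$ for constant targets on a family of discs;
\item an averaging over the parameter space;
\item control of the exceptional set where the $a_i(z)$ come close, handled via the smallness of $T(r,a_i-a_j)$.
\end{itemize}

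\textbf{Conclusion.} Since reproducing this is far beyond the present paper, I would simply invoke \cite[Corollary 1]{Y} as a black box. It will then be applied with $q=4$, to $a_1,a_2,a_3,c$, inside the proof of Theorem \ref{1.1}.
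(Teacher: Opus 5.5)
Your proposal agrees with the paper, which gives no proof of Theorem \ref{2.1} and simply quotes \cite[Corollary 1]{Y}, so invoking it as a black box is exactly the paper's route. Two slips in the optional material. In your Step 1 sketch the roles are reversed: one needs $V\subset W$ (e.g.\ products of degree $\le s$ and $\le s+1$) and the Wronskian of $fb_1,\dots,fb_t$ together with a basis of $W$, so that each $(f-a_i)b_j$ lies in the span. Your closing remark about the application is also inaccurate: the paper never applies the theorem to $f$ with $a_1,a_2,a_3,c$ and $q=4$. It uses it only in the linearly dependent case of the proof of Theorem \ref{1.1}, applied to $f_1$ with the three constant targets $0$, $\infty$ and $c_0/(c_0-c_1)$; $f_1$ attains the last of these exactly at the zeros of $f_2$.
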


\begin{proposition}[\cite{QA}, Proposition 3.8]\label{2.2}
Let $f$ and $g$ be non-constant meromorphic functions and $a_i,b_i\ (i=1,2,3)$ $ (a_i\ne a_j, b_i\ne b_j, i\ne j)$ be small functions (with respect to $f$ and $g$). Assume that $f$ is not a quasi-M\"{o}bius transformation of $g$. Then for every positive integer $n$ we have the following inequality
$$\|\  N(r,\nu)\le \overline N(r,|\nu^0_{f-a_1}-\nu^0_{g-b_1}|)+\overline N(r,|\nu^0_{f-a_2}-\nu^0_{g-b_2}|)+S(r), $$
where $S(r)=o(T(r,f)+T(r,g))$ outside a finite Borel measure set of $[1,+\infty)$ and $\nu$ is the divisor defined by 
$\nu (z)=\max\{0,\min\{\nu^0_{f-a_3}(z),\nu^0_{g-b_3}(z)\}-1\}.$
\end{proposition}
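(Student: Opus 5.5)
The plan is to build a single auxiliary meromorphic function $H$ from cross-ratios. $H$ should vanish to order at least $\nu(z)$ wherever $f-a_3$ and $g-b_3$ both have zeros. Its poles should be simple and located only where the multiplicities of $f-a_i$ and $g-b_i$ $(i=1,2)$ disagree, apart from a small exceptional set. Then $N(r,\nu)\le N(r,\nu^0_H)\le T(r,H)+O(1)$, and the first main theorem together with the lemma on the logarithmic derivative will give the inequality.

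Concretely, I set
$$\Phi=\frac{f-a_1}{f-a_2}\cdot\frac{a_3-a_2}{a_3-a_1},\qquad \Psi=\frac{g-b_1}{g-b_2}\cdot\frac{b_3-b_2}{b_3-b_1}.$$
Here $\Phi-1=\frac{(a_1-a_2)(f-a_3)}{(f-a_2)(a_3-a_1)}$, and similarly for $\Psi-1$. Then I put
$$H=\frac{\Phi'}{\Phi-1}-\frac{\Psi'}{\Psi-1}\cdot\frac{\Phi-1}{\Phi}\cdot\frac{\Psi}{\Psi-1}\quad\text{or, more simply,}\quad H=\frac{\Phi'}{\Phi}-\frac{\Psi'}{\Psi}.$$
I would work with the simpler choice $H=\Phi'/\Phi-\Psi'/\Psi$. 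First, if $H\equiv 0$ then $\Phi=C\Psi$ for a constant $C\neq0$. Solving this for $f$ gives $f=(\alpha_1g+\alpha_2)/(\alpha_3g+\alpha_4)$ with small coefficients built from $a_i,b_i,C$, and the determinant is nonzero because $a_1\ne a_2$ and $b_1\ne b_2$. This contradicts the hypothesis, so $H\not\equiv0$.

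Next, the local analysis. I discard the discrete set $E$ of zeros and poles of $a_i-a_j$, $b_i-b_j$ and of the $a_i,b_i$ themselves; since these are small functions, $\overline N(r,\nu_E)=S(r)$. Take $z_0\notin E$ with $\nu^0_{f-a_3}(z_0)=m\ge1$ and $\nu^0_{g-b_3}(z_0)=l\ge1$. Then $\Phi-1$ vanishes to order $m$, so $\Phi'$ vanishes to order $m-1$ while $\Phi(z_0)=1$. Hence $\Phi'/\Phi$ vanishes to order $m-1$, $\Psi'/\Psi$ vanishes to order $l-1$, and $H$ vanishes to order at least $\min\{m,l\}-1=\nu(z_0)$. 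Outside $E$, the poles of $H$ are simple and can only occur at zeros and poles of $\Phi$ or $\Psi$. These are zeros of $f-a_1$, $f-a_2$, $g-b_1$, $g-b_2$; at a pole of $f$, $\Phi$ takes the finite nonzero value $(a_3-a_2)/(a_3-a_1)$. The residue of $H$ at such a point equals
$$\bigl(\nu^0_{f-a_1}-\nu^0_{g-b_1}\bigr)-\bigl(\nu^0_{f-a_2}-\nu^0_{g-b_2}\bigr)$$
evaluated at that point. So $H$ has a pole there only if $|\nu^0_{f-a_1}-\nu^0_{g-b_1}|$ or $|\nu^0_{f-a_2}-\nu^0_{g-b_2}|$ is positive. (Zeros of $f-a_1$ and $f-a_2$ cannot coincide off $E$.) This yields
$$N(r,\nu^\infty_H)\le \overline N(r,|\nu^0_{f-a_1}-\nu^0_{g-b_1}|)+\overline N(r,|\nu^0_{f-a_2}-\nu^0_{g-b_2}|)+S(r).$$

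Finally, $N(r,\nu)\le N(r,\nu^0_H)+S(r)\le T(r,H)+S(r)=m(r,H)+N(r,\nu^\infty_H)+S(r)$. Also $m(r,H)\le m(r,\Phi'/\Phi)+m(r,\Psi'/\Psi)=S(r)$ by the logarithmic derivative lemma, since $T(r,\Phi)\le 2T(r,f)+S(r)$ and similarly for $\Psi$. The main point requiring care is the bookkeeping on the exceptional set $E$. There, $H$ may have extra poles and the vanishing-order estimate can fail, but only with bounded multiplicity per point of $E$ counted against the small functions. One therefore has to check that those contributions are genuinely $O(\overline N(r,\nu_E))+O(N(r,\text{poles/zeros of small functions}))=S(r)$. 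The pole bound is fine because all poles of $H$ are simple. For the zero side one only uses $N(r,\nu)-N(r,\nu|_{\C\setminus E})$, which I would control by noting that at points of $E$ the multiplicities $\nu^0_{f-a_3}$ are bounded in counting function by $N(r,\nu^0_{a_3-a_j})$-type terms of small functions, or else by treating those points via the truncated comparison directly.
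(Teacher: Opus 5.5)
\textbf{Verdict: there is a gap, in your treatment of the exceptional set $E$ on the zero side; the rest of your argument is sound.} Specifically, your auxiliary function $H=\Phi'/\Phi-\Psi'/\Psi$, the proof that $H\not\equiv 0$, the residue computation showing that $H$ has only simple poles, located off $E$ only where one of the two multiplicity differences is nonzero, and the estimate $m(r,H)=S(r)$ are all correct. The paper quotes this proposition without proof, but up to normalization your $H$ is the function $f'/(f-1)-g'/(g-1)$ that it uses in the lemma following Lemma~\ref{2.4}.

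\textbf{What fails.} Your first suggested fix claims that the multiplicities $\nu^0_{f-a_3}$ at points of $E$ are controlled by counting functions of the small functions. This is false. At a pole of $a_1$, or at a simple zero of $a_3-a_1$, nothing prevents $f-a_3$ and $g-b_3$ from both vanishing to order $100$. Only the reduced count $\overline N(r,\nu_E)$ is known to be small. So $N(r,\nu)-N(r,\nu|_{\C\setminus E})$ is not a priori $S(r)$. This is exactly the truncated-versus-full-multiplicity issue the paper stresses in its introduction. Your second suggestion, ``treating those points via the truncated comparison directly,'' gives no actual argument.

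\textbf{The repair.} Do not discard $E$. Instead, compare $\nu$ with $\nu^0_H$ at every point, with an error bounded by the orders of fixed small functions rather than by multiplicities of $f-a_3$.

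Put $s_1=\frac{a_1-a_2}{a_3-a_1}$ and $s_2=a_3-a_2$. Then $\Phi-1=s_1\frac{f-a_3}{f-a_2}$ and $f-a_2=(f-a_3)+s_2$. Let $m=\nu^0_{f-a_3}(z_0)\ge 1$.

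\emph{Case $m>\nu_{s_2}(z_0)$.} Then $\nu_{f-a_2}(z_0)=\nu_{s_2}(z_0)$, so $\nu_{\Phi-1}(z_0)=m+\nu_{s_1}(z_0)-\nu_{s_2}(z_0)$.
If this is $\ge 1$, then $\Phi(z_0)=1$ and $\nu_{\Phi'/\Phi}(z_0)=m-1+\nu_{s_1}(z_0)-\nu_{s_2}(z_0)$.
Otherwise $m\le |\nu_{s_1}(z_0)|+|\nu_{s_2}(z_0)|$.

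\emph{Case $m\le \nu_{s_2}(z_0)$.} Then $m\le|\nu_{s_2}(z_0)|$.

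A logarithmic derivative has order $\ge -1$ everywhere. So in every case $\nu_{\Phi'/\Phi}(z_0)\ge m-1-K_1(z_0)$, where $K_1=|\nu_{s_1}|+|\nu_{s_2}|$. In the same way, $\nu_{\Psi'/\Psi}(z_0)\ge l-1-K_2(z_0)$, with $K_2$ built from the $b$-analogues of $s_1,s_2$.

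Hence $\nu\le \nu^0_H+K_1+K_2$ on all of $\C$. Moreover $N(r,K_1+K_2)\le 2\sum_{s}T(r,s)+O(1)=S(r)$, since the full-multiplicity counting functions of the zeros and poles of a small function $s$ are bounded by $T(r,s)+O(1)$. This gives $N(r,\nu)\le T(r,H)+S(r)$, and your pole estimate then finishes the proof.
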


\begin{lemma}[{Cf. \cite{LY95}}]\label{2.3}
Let $f_1,f_2, ...,, f_n$ be non-constant meromorphic functions satisfying $f_1+f_2+...+f_n\equiv 1$. Then, we have
$$\|\ T(r,f_i)\le\sum_{j=1}^{n}N^{[n-1]}(r,\nu^0_{f_j})+(n-2)\sum_{j=1}^{n}\overline N_{n-1}\left(r,\nu^{\infty}_{f_j}\right )+o(\sum_{j=1}^{n}T(r,f_j)),\; i=1,2.$$
\end{lemma}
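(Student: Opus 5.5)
The plan is to deduce the lemma from Cartan's second main theorem for a holomorphic curve in $\P^{n-1}$ with truncation level $n-1$. I assume, as in the source \cite{LY95}, that $f_1,\dots,f_n$ are linearly independent over $\C$. Otherwise one first regroups the identity into a shorter relation among independent combinations and argues by induction on $n$; this is standard and I will only indicate it.

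\textbf{Step 1 (reduced representation).} Write $f_j=h_j/h_0$ for $1\le j\le n$, where $h_0,h_1,\dots,h_n$ are entire functions without common zeros. The identity $\sum_j f_j\equiv 1$ becomes $h_1+\dots+h_n=h_0$. Consider the holomorphic curve $F=(h_1:\dots:h_n)\colon\C\to\P^{n-1}$. By the independence assumption it is linearly nondegenerate. Take the $n+1$ hyperplanes $H_j=\{x_j=0\}$ for $1\le j\le n$ and $H_0=\{x_1+\dots+x_n=0\}$. These are in general position, and $H_0\circ F=h_0$.

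\textbf{Step 2 (Cartan's theorem).} Cartan's second main theorem with truncation $n-1$ gives
$$\|\ T_F(r)\le \sum_{j=0}^{n}N^{[n-1]}(r,\nu^0_{h_j})+o(T_F(r)).$$
On the other side, $T(r,f_i)\le T_F(r)+O(1)$ for every $i$, and $T_F(r)\le\sum_j T(r,f_j)+O(1)$. So the error term is $o(\sum_j T(r,f_j))$.

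\textbf{Step 3 (translating zeros of the $h_j$).} At a point where $h_0(z)\ne0$, we have $\nu^0_{h_j}(z)=\nu^0_{f_j}(z)$. At a zero $z$ of $h_0$, the functions $f_j$ with $h_j(z)\ne0$ have poles at $z$. The remaining $h_j$ may vanish there, but only to order $\le \nu^0_{h_0}(z)$ at the level of poles. So the total contribution at such a point is at most
$$\min\{n-1,\nu^0_{h_0}(z)\}+\sum_{j:\,h_j(z)=0}\min\{n-1,\nu^0_{h_j}(z)\}.$$
This sum must be bounded by $\sum_j\nu^{0,[n-1]}_{f_j}(z)+(n-2)\,\#\{j: f_j \text{ has a pole at } z\}$.

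The key observation is that, because $\sum f_j\equiv1$, a pole of one $f_j$ forces a pole of at least one other $f_\ell$. Hence at least two indices contribute to the pole term, and $(n-2)\cdot 2\ge n-1$ whenever $n\ge3$. This absorbs the term $\min\{n-1,\nu^0_{h_0}(z)\}$.

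The extra zeros of the $h_j$ at common zeros with $h_0$ need finer bookkeeping. I would normalize by the minimal order $m=\min_j\nu^0_{h_j}(z)$ locally; this is $0$ by the no-common-zero condition. The excess of each $h_j$ is then charged to the pole count of the other $f_\ell$. Summing over $z$ and integrating yields the stated bound with $\overline N_{n-1}(r,\nu^\infty_{f_j})$, where the truncation "$\le n-1$" comes from the fact that larger orders are already covered by the $(n-1)$-truncation of zeros.

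\textbf{Main obstacle.} I expect the main difficulty to be the local multiplicity bookkeeping in Step 3. It must be shown that the $(n-1)$-truncated counting of zeros of $h_0$ and of the $h_j$ at poles really fits under $(n-2)$ times the reduced pole counts. The small cases, in particular $n=2$, must also be handled. For $n=2$, $f_2=1-f_1$, and the estimate should instead follow from Theorem \ref{2.1} with the values $0,1,\infty$. I would first check this case directly, then run the general local count in Step 3.
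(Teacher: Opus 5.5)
The paper gives no proof of this lemma; it only cites \cite{LY95}. Your outline therefore has to stand on its own, and it has genuine gaps.

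\textbf{The hypotheses.} First, the dependent case cannot be handled by your remark. Without linear independence the inequality is simply false. For $n=4$, take $f_1=e^{2z}$, $f_2=-e^{2z}$, $f_3=e^z$, $f_4=1-e^z$. These have no poles and only the simple zeros of $f_4$, so the right-hand side is $r/\pi+o(r)$, while $T(r,f_1)=2r/\pi$. So no ``regrouping and induction'' can treat dependent families; independence has to be an explicit hypothesis.

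Second, the case $n=2$ is false even with independence. Take $f_1=1/(1-e^z)$ and $f_2=-e^z/(1-e^z)$. They have no zeros, and the pole term carries the factor $n-2=0$, yet $T(r,f_1)=r/\pi+O(1)$. Theorem~\ref{2.1} only gives the bound with an extra $\overline N(r,\nu^\infty_{f_1})$, so it does not rescue $n=2$. The lemma must be read for $n\ge3$, with $\overline N_{n-1}$ read as the reduced pole count $\overline N$.

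\textbf{Step 3 for $n\ge3$.} Once you pass to the truncated form of Cartan's theorem, the pointwise inequality you need is false. Take $n=3$ and a point $z$ with $h_1(z)h_2(z)\ne0$ and $\nu^0_{h_0}(z)=\nu^0_{h_3}(z)=2$. Then $f_1,f_2$ have double poles and $f_3$ is holomorphic and nonzero at $z$. Your bound counts $\min\{2,2\}+\min\{2,2\}=4$ there, while the lemma's right-hand side counts only $(n-2)\cdot2=2$. Your two-poles observation absorbs the $h_0$ term, but the $h_3$ term is left over. A zero of $h_j$ with $\nu^0_{h_j}(z)\ge\nu^0_{h_0}(z)$ produces no pole of $f_j$, and when the orders are equal it produces no zero either, so there is nothing to charge it to.

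\textbf{The missing idea: keep the Wronskian term.} Use
\[
T_F(r)\le\sum_{j=0}^nN(r,\nu^0_{h_j})-N(r,\nu^0_W)+o(T_F(r)),\qquad W=W(h_1,\dots,h_n)=h_0^n\,W(f_1,\dots,f_n).
\]
At $z$ pick $\ell$ with $\nu^\infty_{f_\ell}(z)$ maximal, or with $f_\ell(z)\ne0$ if $z$ is not a pole. Since $\sum_j f_j=1$, one has $W(f_1,\dots,f_n)=\pm W(f_k':k\ne\ell)$. Expanding this determinant term by term bounds the local contribution by
\[
\sum_{j=1}^n\min\{\nu^0_{f_j}(z),n-1\}+\sum_{t=n-P+1}^{n-1}t ,
\]
where $P\in\{0\}\cup\{2,\dots,n\}$ is the number of $f_j$ with a pole at $z$. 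In the example above this gives at most $2$, as required. Finally,
\[
\frac{(P-1)(2n-P)}{2}\le(n-2)P \iff P^2-5P+2n\ge0,
\]
and the right-hand condition holds for all integers $P$ precisely when $n\ge3$. This is where the factor $n-2$ comes from, and it is exactly why $n=2$ fails.
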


\begin{lemma}[{Cf. \cite[Lemma 3.2]{QK}}]\label{2.4}
Let $f_1$ and $f_2$ be two non-constant meromorphic functions. If $(f_1^sf_2^t-1)$ is not identically zero for all integers $s$ and $t$ $(|s|+|t|>0)$ then for any positive integer $n$, one of the following assertion holds:\\
(i) $\overline N(r,1;f_1,f_2)\leq (2^{n(n+2)}-n(n+2)-1)\sum_{i=1,2}\left(\overline N(r,\nu^0_{f_i})+\overline N(r,\nu^\infty_{f_i})\right)+\dfrac{1}{n+2}T(r)+S(r);$\\
(ii) $\overline N(r,1;f_1,f_2)\leq (2^{(n+1)^2}+n^4+4n^3+n^2-6n-2)\sum_{i=1,2}\left(\overline N(r,\nu^0_{f_i})+\overline N(r,\nu^\infty_{f_i})\right)+S(r),$\\
where $\overline N(r,1;f_1,f_2)$ denotes the reduced counting function of $f_1$ and $f_2$ related to the common $1$-points, $T(r)=T(r,f_1)+T(r,f_2)$ and $\|\ S(r)=o(T(r))$.
\end{lemma}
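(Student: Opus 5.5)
This is Lemma~\ref{2.4}, which is quoted from \cite[Lemma 3.2]{QK}. I would reprove it by a dichotomy on the monomials in $f_1,f_2$ of bounded degree. Fix $n$ and let $\mathcal M_n=\{f_1^af_2^b:\ a,b\ge 0,\ a+b\le n\}$; it has $d=(n+1)(n+2)/2$ elements, and the $1$ is included. At every common $1$-point of $f_1,f_2$ away from zeros and poles of $f_1,f_2$, every monomial equals $1$. The two cases of the lemma would correspond to whether $\mathcal M_n$ is linearly independent over $\C$ or not.

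\emph{Case (i): $\mathcal M_n$ is linearly independent.} Consider the holomorphic curve $F=[M_1:\dots:M_d]$ into $\P^{d-1}$, which is linearly nondegenerate. Apply Cartan's second main theorem to the hyperplanes formed by the coordinate hyperplanes together with the hyperplanes $\{x_j=x_1\}$. Equivalently, estimate the Wronskian of $(M_1,\dots,M_d)$ from below by the logarithmic derivative lemma. Each common $1$-point is a point where the $d$ coordinates coincide, so there $F$ meets all $\{x_j=x_1\}$ simultaneously. This forces a zero of order at least $d-1$ of a suitable generalized Wronskian of the differences $M_j-M_1$.

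Comparing orders, one expects
$$(d-1)\overline N(r,1;f_1,f_2)\le T(r,F)+\text{(zeros/poles of } f_i)+S(r),$$
with $T(r,F)\le \frac{n}{2}\cdot$ (roughly) $\,(n+1)\,T(r)$ up to normalization. This gives the factor $\frac{1}{n+2}T(r)$, since $\frac{n}{d-1}$-type ratios simplify to $\frac{2}{n+3}\le\frac1{n+2}$-type quantities after the right choice of degrees. The zeros and poles of $f_1,f_2$ enter only through the truncated terms $\overline N(r,\nu^0_{f_i})+\overline N(r,\nu^\infty_{f_i})$. Their coefficients, of order $2^{n(n+2)}$, come from counting the multiplicities with which the Wronskian can vanish at such points.

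\emph{Case (ii): there is a nontrivial linear relation $\sum c_{ab}f_1^af_2^b\equiv 0$.} Choose a minimal vanishing subsum and normalize it to the form $h_1+\dots+h_m\equiv 1$, where the $h_j$ are ratios of monomials. Apply Lemma~\ref{2.3} (the Borel-type lemma) to this identity. If some $h_j$ is constant, then $f_1^sf_2^t$ is constant for some $(s,t)\ne(0,0)$. That does not directly contradict the hypothesis, so I would iterate: the relation produced is multiplicative, and repeating the argument with the finitely many resulting subsums expresses everything in terms of a lower-dimensional monomial group. The hypothesis that $f_1^sf_2^t\not\equiv1$ then makes each common $1$-point a zero of some nonconstant $h_j-1$, which is controlled by Lemma~\ref{2.3}. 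Otherwise all $h_j$ are nonconstant, and Lemma~\ref{2.3} bounds $T(r,h_j)$ by truncated counting functions of zeros and poles of $f_1,f_2$. Since each common $1$-point is a $1$-point of the nonconstant monomial $h_j$, we get $\overline N(r,1;f_1,f_2)\le T(r,h_j)+O(1)$. This yields (ii), with the polynomial constant in $n$ arising from the degrees, at most $2n$, and the truncation levels $m-1\le d-1$.

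The main obstacle I expect is Case (i): arranging the Wronskian or second-main-theorem argument so that common $1$-points are counted with weight $d-1$ while the loss stays at $\frac{1}{n+2}T(r)$, and bookkeeping the exact constants $2^{n(n+2)}-n(n+2)-1$. If the direct SMT route gives the wrong constant, I would switch to the $abc$-style argument of Corvaja--Zannier: construct an auxiliary combination of monomials that vanishes to high order at the common $1$-points (by Siegel-type linear algebra, which is where the $2^{n(n+2)}$ comes from), and bound its characteristic from above.
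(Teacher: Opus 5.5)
The paper does not prove this lemma; it is quoted from \cite[Lemma 3.2]{QK}. So your proposal has to stand on its own, and there is a genuine gap in Case (i). That is the only place the term $\frac{1}{n+2}T(r)$ can come from, and it is not proved.

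\textbf{The Case (i) mechanism does not work as written.} Cartan's second main theorem bounds $T(r,F)$ from \emph{above} by counting functions. It therefore cannot bound from above how often $F$ passes through $(1:\dots:1)$. In addition, the coordinate hyperplanes together with $\{x_j=x_1\}$ are not in general position. The Wronskian of all the $M_j$ does not vanish at a common $1$-point in general: for $d=2$ its value there is $(M_2-M_1)'(z_0)$. What does vanish to order $\ge d-1$ is $G=W(M_2-M_1,\dots,M_d-M_1)$. But everything then rests on the inequality you only say one ``expects''. The obvious bound $N(r,\nu^0_G)\le (d-1)T(r,F)+S(r)$ gives nothing.

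\textbf{The missing idea} is to use that the $M_j$ are monomials in $f_1,f_2$.
\begin{enumerate}
\item By multilinearity, $G=\sum_j\pm W(M_i:i\ne j)=\bigl(\prod_iM_i\bigr)\sum_j\pm L_j/M_j$. Here $L_j=W(M_i:i\ne j)/\prod_{i\ne j}M_i$ is a polynomial in logarithmic derivatives of $f_1,f_2$, so $m(r,L_j)=S(r)$.
\item Jensen's formula gives $N(r,\nu_G)\le\frac{1}{2\pi}\int_0^{2\pi}\bigl(\log\prod_i|M_i|+\max_j\log|M_j|^{-1}\bigr)(re^{i\theta})\,d\theta+S(r)$. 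The first integral is a signed zero/pole count of $f_1,f_2$. The second is $n\sum_i m(r,1/f_i)$ for the right monomial set (see below).
\item At a zero or pole $z$ of $f_1f_2$ one has $\nu_G(z)\ge\sum_i\nu_{M_i}(z)-\max_j\nu_{M_j}(z)-\binom{d-1}{2}$.
\end{enumerate}
With the first main theorem, the full-multiplicity terms then cancel, leaving
\[
(d-1)\,\overline N(r,1;f_1,f_2)\le nT(r)+\binom{d-1}{2}\sum_{i=1,2}\bigl(\overline N(r,\nu^0_{f_i})+\overline N(r,\nu^\infty_{f_i})\bigr)+S(r).
\]
Your attribution of the constants to ``counting the multiplicities with which the Wronskian can vanish'' does not replace this computation.

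\textbf{Your numerics are also off.} With monomials of total degree $\le n$ you have $d-1=n(n+3)/2$. The resulting ratio $\frac{2}{n+3}$ is \emph{larger} than $\frac{1}{n+2}$, so the inequality you wrote is reversed. To get (i) as stated, use the $(n+1)^2$ monomials $f_1^af_2^b$ with $0\le a,b\le n$. Then $d-1=n(n+2)$ and $\max_j|M_j|^{-1}=\max\{1,|f_1|^{-1}\}^n\max\{1,|f_2|^{-1}\}^n$. The estimate above becomes exactly (i), even with the smaller coefficient $\frac{n(n+2)-1}{2}$. This choice also explains why $n(n+2)$ and $(n+1)^2$ appear in the constants.

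\textbf{Case (ii)} is essentially correct, but no iteration is needed. A two-term minimal relation means $f_1^sf_2^t\equiv c$, and $c\ne 1$ by hypothesis, so there are no common $1$-points at all. A minimal relation with $m\ge 3$ terms gives linearly independent nonconstant $h_j$. Each $h_j$ takes a fixed constant value at every common $1$-point. Lemma~\ref{2.3} then yields (ii) with coefficient at most $N(2N-3)$, where $N=n(n+2)$, which is below the stated one.
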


\begin{lemma}
Let $f$ and $g$ be non-constant meromorphic functions, and $n_1,n_2,n_3,k$ positive integers such that $k> n_i\ (\forall 1\le i\le 3)$ and $\Delta=n_1n_2n_3-n_1-n_2-n_3-2>0$. Suppose that $f$ and $g$ weakly share each $0,\infty,1$ with bi-weights $(n_1,k),(n_2,k),(n_3,k)$ respectively. If $f$ is not a M\"{o}bius transformation of $g$ then
$$\|\ \Delta\overline N_{(n_1+1,k)}(r,\nu^0_f)\le \frac{2(n_2+1)(n_3+1)}{k+1}T(r)+S(r),$$
where $T(r)=T(r,f)+T(r,g)$, $S(r)=o(T(r))$.
\end{lemma}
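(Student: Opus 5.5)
The plan is to apply Proposition \ref{2.2} three times, once for each choice of which shared value plays the role of $(a_3,b_3)$. This gives a linear system of inequalities among the reduced counting functions of ``high multiplicity'' shared points, which we then solve. Write $x_1=\overline N_{(n_1+1,k)}(r,\nu^0_f)$, $x_2=\overline N_{(n_2+1,k)}(r,\nu^\infty_f)$ and $x_3=\overline N_{(n_3+1,k)}(r,\nu^0_{f-1})$.

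\textbf{Step 1: unpack the bi-weight condition.} If $E^S_{n,k)}(a,f)=E^S_{n,k)}(a,g)$, then for $z\notin S$ we have $\nu^a_f(z)\le k$ if and only if $\nu^a_g(z)\le k$, and in that case $\min\{\nu^a_f(z),n+1\}=\min\{\nu^a_g(z),n+1\}$. Consequently, outside $S$ a point where $\nu^a_f\neq\nu^a_g$ is of one of two kinds:
\begin{itemize}
\item both multiplicities lie in $[n+1,k]$;
\item both multiplicities exceed $k$.
\end{itemize}
Also, a point counted by $\overline N_{(n+1,k)}(r,\nu^a_f)$ is, outside $S$, also such a point for $g$. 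Since $\overline N(r,\nu_S)=S(r)$, this gives
$$\overline N(r,|\nu^a_f-\nu^a_g|)\le \overline N_{(n+1,k)}(r,\nu^a_f)+\overline N_{(k+1}(r,\nu^a_f)+S(r),$$
where
$$\overline N_{(k+1}(r,\nu^a_f)\le \frac{1}{k+1}N(r,\nu^a_f)\le \frac{1}{k+1}T(r,f)+S(r).$$
Call this last quantity $\varepsilon_a$.

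\textbf{Step 2: apply Proposition \ref{2.2}.} Take $(a_3,b_3)=(0,0)$ and the other two pairs $(\infty,\infty),(1,1)$. At each point counted by $x_1$ and outside $S$, both $f$ and $g$ vanish to order at least $n_1+1$, so the divisor $\nu$ of Proposition \ref{2.2} is at least $n_1$ there. Combining with Step 1,
$$n_1x_1\le x_2+x_3+\varepsilon_\infty+\varepsilon_1+S(r).$$
Permuting the roles of $0,\infty,1$ gives the analogous inequalities
$$n_2x_2\le x_1+x_3+\varepsilon_0+\varepsilon_1+S(r),\qquad n_3x_3\le x_1+x_2+\varepsilon_0+\varepsilon_\infty+S(r).$$

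\textbf{Step 3: solve the system.} Set $X=x_1+x_2+x_3$ and rewrite each inequality as $(n_i+1)x_i\le X+(\text{two }\varepsilon\text{'s})$. Dividing by $n_i+1$ and summing gives
$$X\Bigl(1-\sum_i\tfrac{1}{n_i+1}\Bigr)\le \sum_i\frac{\text{two }\varepsilon\text{'s}}{n_i+1}.$$
Multiplying through by $\prod_i(n_i+1)$, the left-hand coefficient becomes
$$\prod_i(n_i+1)-\sum_{i<j}(n_i+1)(n_j+1)=n_1n_2n_3-n_1-n_2-n_3-2=\Delta>0.$$
Hence $X$, and then $x_1$, is bounded by an explicit multiple of $\frac{1}{k+1}T(r)$.

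Some bookkeeping is needed to get exactly the stated constant $\frac{2(n_2+1)(n_3+1)}{k+1}$. I would not substitute $X$ back in. Instead I would take the combination of the three inequalities weighted by $(n_2+1)(n_3+1)$, $(n_1+1)(n_3+1)$ and $(n_1+1)(n_2+1)$ (the cofactors), and bound each $\varepsilon$ by $T(r)/(k+1)$. Alternatively, I would bound the $\varepsilon$ terms using $\frac{1}{k+1}\bigl(N(r,\nu^\infty_f)+N(r,\nu^0_{f-1})\bigr)$ more economically, charging some of them to $g$. If the constant does not come out exactly, some slack in how the $\varepsilon$'s are assigned to $f$ versus $g$ should fix it.

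\textbf{Main obstacle.} Proposition \ref{2.2} requires that $f$ is not a \emph{quasi}-M\"obius transformation of $g$, while the lemma only assumes that $f$ is not a M\"obius transformation of $g$. So I must also handle the case $f=(\alpha_1g+\alpha_2)/(\alpha_3g+\alpha_4)$ with non-constant small $\alpha_i$. In that case I would argue directly: the shared values $0,\infty,1$ of $g$ are mapped by the relation to the small functions $-\alpha_2/\alpha_1$, $-\alpha_4/\alpha_3$ and the solution of $f=1$. Unless these coincide identically with $0,\infty,1$ (which would force the relation to be M\"obius, i.e.\ $f=g$ up to constants), the zeros of $f$ of multiplicity in $[n_1+1,k]$ outside $S$ are zeros of $g-\beta$ for a small function $\beta\not\equiv0$ with $g=0$ there. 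Hence they lie in the zero set of the small function $\beta$, and $x_1=S(r)$, which gives the bound trivially. Checking this case analysis carefully is where I expect the most care to be required.
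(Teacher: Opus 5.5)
\textbf{Gap: the quasi-M\"obius case.} Your elimination in Step 3 is the same as the paper's. The gap is in how you get the three basic inequalities. Proposition~\ref{2.2} requires that $f$ not be a \emph{quasi}-M\"obius transformation of $g$, whereas the lemma only excludes M\"obius transformations, and your patch for the leftover case does not work. Suppose $f=(\alpha_1g+\alpha_2)/(\alpha_3g+\alpha_4)$ does not send $0,\infty,1$ to $0,\infty,1$ identically. You cannot conclude from this that $\beta=-\alpha_2/\alpha_1\not\equiv 0$, because the mismatch may occur only at $\infty$ or at $1$. For example, take $f=\alpha g$ with $\alpha$ a non-constant small function. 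It is quasi-M\"obius but not M\"obius, it has $\alpha_2\equiv 0$, and off a small set its zeros are exactly the zeros of $g$ with the same multiplicities. Nothing in your argument makes $x_1$ small there, so the whole sub-case $\alpha_2\equiv 0$ is left open. It can be closed, e.g.\ by the second main theorem for $g$ and the small functions $0$, $a$ and the $g$-preimage of $a$, where $a\in\{1,\infty\}$ is a value the relation does not fix. You have not done this.

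\textbf{How the paper avoids it.} The paper never needs Proposition~\ref{2.2} here; it works with $H=\frac{f'}{f-1}-\frac{g'}{g-1}$.
\begin{itemize}
\item If $H\equiv 0$ then $f-1=C(g-1)$, so ``not M\"obius'' is exactly the hypothesis needed.
\item At $z\notin S$ counted by $x_1$, both $f$ and $g$ vanish to order in $[n_1+1,k]$, so $H$ vanishes there to order $\ge n_1$.
\item $H$ has only simple poles, located where $\nu^\infty_f\ne\nu^\infty_g$ or $\nu^1_f\ne\nu^1_g$.
\item $m(r,H)=S(r)$ by the logarithmic derivative lemma.
\end{itemize}
Hence $n_1x_1\le N(r,\nu^0_H)\le N(r,\nu^\infty_H)+S(r)\le\overline N(r,|\nu^\infty_f-\nu^\infty_g|)+\overline N(r,|\nu^1_f-\nu^1_g|)+S(r)$. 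This is the inequality you wanted, now under the correct hypothesis. The other two inequalities come the same way from $\frac{f'}{f(f-1)}-\frac{g'}{g(g-1)}$ and $\frac{f'}{f}-\frac{g'}{g}$. Your Proposition~\ref{2.2} route does suffice for the quasi-M\"obius form (\ref{3.5}) that is used later.

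\textbf{The constant.} No slack is needed. Bound each pair of error terms by $\frac{2}{k+1}T(r)$. The paper uses $\overline N_{(k+1}(r,\nu^a_f)+\overline N_{(k+1}(r,\nu^a_g)\le\frac{1}{k+1}T(r)+S(r)$, which does not rely on high-multiplicity points of $f$ and $g$ matching. Then multiply the three inequalities by $n_2n_3-1$, $n_3+1$, $n_2+1$ and add. The coefficients of $x_2$ and $x_3$ cancel, the coefficient of $x_1$ is $\Delta$, and the right side is $(n_2+1)(n_3+1)\frac{2}{k+1}T(r)$. The weights $(n_j+1)(n_l+1)$ you proposed only bound $X$. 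Substituting that bound back into $(n_1+1)x_1\le X+\frac{2}{k+1}T(r)$ gives the same constant.
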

\begin{proof}
Denote by $S$ the discrete subset of $\C$ with $\|\ N(r,\nu_S)=o(T(r,f)+T(r,g))$ such that $E^S_{n_i,k)}(a_i,f)=E^S_{n_i,k)}(a_i,g)\ (i=1,2,3),$ where $a_1=0,a_2=\infty,a_3=1$. We set
\begin{align*}
H=\dfrac{f'}{f-1}-\dfrac{g'}{g-1}
\end{align*}
Since $f$ is not a M\"{o}bius transformation of $g$, $H\not\equiv 0$. Let $z\ (z\not\in S)$ be a zero of $f$ with $\nu^0_f(z)\le k$. If $\nu^0_f(z)> n_1$ then $z$ must be a zero of $H$ with multiplicity at least $n_1$. Therefore, we have
\begin{align}\label{3.6}
\begin{split}
n_1\overline N_{(n_1+1,k)}(r,\nu^{a_1}_f)&\le N(r,\nu^0_{H})\le T(r,H)=N(r,\nu^{\infty}_{H})+S(r)\\
&\leq \overline N(r,|\nu^{\infty}_f-\nu^{\infty}_g |)+\overline N(r,|\nu^{1}_f-\nu^{1}_g |)+S(r)\\
&\leq \sum_{i=2,3}(\overline N_{(n_i+1,k)}(r,\nu^{a_i}_{f})+\sum_{h=f,g}\overline N_{(k+1}(r,\nu^{a_i}_{h}))+S(r)\\
&\leq \sum_{i=2,3}\overline N_{(n_i+1,k)}(r,\nu^{a_i}_{f})+\dfrac{2}{k+1}T(r)+S(r).
\end{split}
\end{align}
Here, the third inequality comes from Proposition \ref{2.2}. Because $a_1,a_2,a_3$ play the same role, we have
$$n_2\overline N_{(n_2+1,k)}(r,\nu^{a_2}_f)\leq\sum_{i=1,3} \overline N_{(n_i+1,k)}(r,\nu^{a_i}_{f})+\dfrac{2}{k+1}T(r)+S(r).$$
Combining (\ref{3.6}) and the above inequality, we get
\begin{align*}
n_1n_2\overline N_{(n_1+1,k)}(r,\nu^{a_1}_f)\le&(n_2+1)\overline N_{(n_3+1,k)}(r,\nu^{a_3}_{f})\\
&+\overline N_{(n_1+1,k)}(r,\nu^{a_1}_{f})+\dfrac{2n_2+2}{k+1}T(r)+S(r).
\end{align*}
Thus,
\begin{align*}
(n_1n_2-1)\overline N_{(n_1+1,k)}(r,\nu^{a_1}_f)\le (n_2+1)\left(\overline N_{(n_3+1,k)}(r,\nu^{a_3}_{f})+\dfrac{2}{k+1}T(r)\right)+S(r).
\end{align*}
Similarly, we have
\begin{align*}
(n_3n_2-1)\overline N_{(n_3+1,k)}(r,\nu^{a_3}_f))\le (n_2+1)\left(\overline N_{(n_1+1,k)}(r,\nu^{a_1}_{f})+\dfrac{2}{k+1}T(r)\right)+S(r).
\end{align*}
From above two inequalities, we obtain
\begin{align*}
(n_3n_2-1)(n_1n_2-1)&\overline N_{(n_1+1,k)}(r,\nu^{a_1}_f)\le (n_3n_2-1)\dfrac{2(n_2+1)}{k+1}T(r)\\ 
&+(n_2+1)^2\left(\overline N_{(n_1+1,k)}(r,\nu^{a_1}_{f})+\dfrac{2}{k+1}T(r)\right)+S(r).
\end{align*}
This implies that
$$\Delta\overline N_{(n_1+1,k)}(r,\nu^{a_1}_f)\le \frac{2(n_2+1)(n_3+1)}{k+1}T(r)+S(r).$$
The lemma is proved.
\end{proof}
In the above lemma, if each $a_i$ is replaced by a pair of small function $(a_i,b_i)\ (1\le i\le 3),$ where $a_i\ne a_j,b_i\ne b_j$ whenever $i\ne j$, then by considering meromorphic functions $\dfrac{f-a_1}{f-a_2}\cdot \dfrac{a_3-a_2}{a_3-a_1}$ and $\dfrac{g-b_1}{g-b_2}\cdot \dfrac{b_3-b_2}{b_3-b_1}$ instead of $f$ and $g$, we will get the following estimate
\begin{align}\label{3.5}
\|\ \Delta\overline N_{(n_1+1,k)}(r,\nu^{a_1}_f)\le \frac{2(n_2+1)(n_3+1)}{k+1}T(r)+S(r),
\end{align}
provided that $f$ is not a quasi-M\"{o}bius transformation of $g$.

\begin{lemma}\label{3.7}
Let $f_0, f_1, f_2$ be non-constant meromorphic functions such that $f_0+f_1+f_2=1$. Let $a,b$ be meromorphic functions such that $af_1+bf_2\not\equiv 0$. If $f_0,f_1,f_2$ are linearly independent then
\begin{align*}
\biggl\|\ T\left(r,\frac{f_0}{af_1+bf_2}\right)&\le \sum_{0\le i\le 2}N^{[2]}(r,\nu^0_{f_i})+3\overline N\left(r,\sum_{0\le i\le 2}\nu^\infty_{f_i}\right)\\
&+o(\sum_{i=1}^nT(r,f_i))+O(T(r,a)+T(r,b)).
\end{align*}
\end{lemma}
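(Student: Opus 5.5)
We need to prove Lemma \ref{3.7}: if $f_0+f_1+f_2=1$ with $f_0,f_1,f_2$ linearly independent, then $T\bigl(r,f_0/(af_1+bf_2)\bigr)$ is bounded by truncated counting functions of the $f_i$, up to small error terms.

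The plan is to pass to a reduced representation and apply the second main theorem for linearly nondegenerate holomorphic curves in $\P^2$ (Cartan's theorem with truncation level $2$), using four hyperplanes in general position. First write $f_i=h_i/h$ with entire $h_0,h_1,h_2,h$ having no common zero, so that $h_0+h_1+h_2=h$. Consider the holomorphic curve $F=(h_0:h_1:h_2)\colon\C\to\P^2$. Linear independence of $f_0,f_1,f_2$ means $F$ is linearly nondegenerate. The four linear forms $x_0,x_1,x_2,x_0+x_1+x_2$ are in general position in $\P^2$. Cartan's second main theorem then gives
$$\|\ (4-3)T_F(r)\le \sum_{i=0}^2 N^{[2]}(r,\nu^0_{h_i})+N^{[2]}(r,\nu^0_{h})+o(T_F(r)).$$
Next, bound the characteristic of the target function. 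We have $f_0/(af_1+bf_2)=h_0/(ah_1+bh_2)$, which is a quotient of linear forms in $(h_0,h_1,h_2)$ with small (meromorphic) coefficients. The standard estimate gives $T\bigl(r,h_0/(ah_1+bh_2)\bigr)\le T_F(r)+O(T(r,a)+T(r,b))$. One way to see this is to clear the denominators of $a,b$ and use $\log|ah_1+bh_2|\le\log\|F\|+\log^+|a|+\log^+|b|+O(1)$ together with Jensen's formula. Also $T_F(r)\le\sum_i T(r,f_i)+O(1)$, so $o(T_F)$ is absorbed into $o(\sum T(r,f_i))$.

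It remains to convert the counting functions on the right-hand side into counting functions for the $f_i$. At a point $z$ which is not a pole of any $f_i$, we have $h(z)\ne0$ after normalization, so $\nu^0_{h_i}(z)=\nu^0_{f_i}(z)$. At a point which is a pole of some $f_i$, we have $\nu^0_h(z)>0$, and the truncated contributions $\min\{2,\nu^0_{h_i}(z)\}$ and $\min\{2,\nu^0_h(z)\}$ total at most $2\cdot$(number of the forms among $h_0,h_1,h_2,h$ vanishing at $z$). Since the $h$'s have no common zero, at most three of the four forms vanish there, and in fact $h$ together with at most two of the $h_i$. Hence the total excess over $\sum_i N^{[2]}(r,\nu^0_{f_i})$ is at most $3$ per pole point, counted with the weight that the truncation permits. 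I expect this multiplicity bookkeeping to be the main delicate step: one needs the per-point bound
$$\sum_i \min\{2,\nu^0_{h_i}(z)\}+\min\{2,\nu^0_h(z)\}\le \sum_i\min\{2,\nu^0_{f_i}(z)\}+3\cdot\mathbf 1_{\{z:\ \text{some } f_i(z)=\infty\}}(z).$$
This holds as follows. At such $z$, any $h_i$ with $h_i(z)\ne0$ contributes nothing. The quantity $\nu^0_{f_i}(z)$ may be zero even when $h_i$ vanishes, so the crude bound uses the fact that at most two of the $h_i$ vanish, each contributing at most $2$, plus at most $2$ from $h$. That gives $6$, which is larger than $3$. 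The bound is sharpened by observing that if $h_i(z)=0=h_j(z)$ and $h(z)=0$, then the third form $h_l=h-h_i-h_j$ also vanishes, contradicting the absence of common zeros. So at most one $h_i$ vanishes together with $h$, giving at most $2+2\le 3+\min\{2,\nu^0_{f_i}\}$; this step may require a slightly finer case check. Summing these per-point bounds yields $\sum N^{[2]}(r,\nu^0_{f_i})+3\overline N(r,\sum_i\nu^\infty_{f_i})$, which completes the proof.
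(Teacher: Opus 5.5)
\textbf{Gap: the per-point bound at poles is false.} Your setup is sound and matches the paper: the nondegenerate curve $F=(h_0:h_1:h_2)$, the four forms $x_0,x_1,x_2,x_0+x_1+x_2$, and the estimate $T\bigl(r,h_0/(ah_1+bh_2)\bigr)\le T_F(r)+O(T(r,a)+T(r,b))$. The gap is in the final step. The inequality you need,
\[
\sum_i\min\{2,\nu^0_{h_i}(z)\}+\min\{2,\nu^0_h(z)\}\le\sum_i\min\{2,\nu^0_{f_i}(z)\}+3,
\]
fails at pole points, and no finer case check repairs it. You correctly note that at most one $h_i$ vanishes together with $h$. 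But your step ``$2+2\le 3+\min\{2,\nu^0_{f_i}(z)\}$'' tacitly assumes $f_i(z)=0$. In fact $h_i=hf_i$ vanishes to order $\nu^0_h(z)$ even where $f_i$ is holomorphic and nonzero. For example, let $f_0,f_1$ have poles of order $m\ge 2$ at $z$ with cancelling principal parts, and let $f_2(z)\ne 0,\infty$. Then $\nu^0_h(z)=\nu^0_{h_2}(z)=m$, so the left side is $4$ while the right side is $3$. Used as a black box, the truncated Cartan second main theorem only gives the lemma with $4\overline N\bigl(r,\sum_i\nu^\infty_{f_i}\bigr)$ instead of $3\overline N\bigl(r,\sum_i\nu^\infty_{f_i}\bigr)$.

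\textbf{How the paper gets the constant $3$.} You must keep the Wronskian rather than truncate each hyperplane separately. Use the untruncated form
\[
T_F(r)\le\sum_i N(r,\nu^0_{h_i})+N(r,\nu^0_h)-N(r,\nu^0_W)+o(T_F(r)),\qquad W=W(h_0,h_1,h_2),
\]
and estimate $\nu=\sum_i\nu^0_{h_i}+\nu^0_h-\nu^0_W$ pointwise.
\begin{itemize}
\item At a pole point, two of the $f_i$, say $f_0$ and $f_1$, have poles of the same maximal order, so $h_0(z)h_1(z)\ne0$.
\item Since $h_0=h-h_1-h_2$, we have $W=\pm W(h,h_1,h_2)$.
\item Every term in the expansion of this determinant differentiates $h_2$ and $h$ to distinct orders $s\ne t$ in $\{0,1,2\}$.
\end{itemize}
Hence
\[
\nu(z)\le\max_{s\ne t}\bigl(\min\{\nu^0_{h_2}(z),s\}+\min\{\nu^0_h(z),t\}\bigr)\le 2+1=3.
\]
This gain from distinct derivative orders ($2+1$ rather than $2+2$) is exactly what truncation to $\min\{2,\cdot\}$ on each hyperplane throws away. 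It is the key point of the paper's proof.
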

\begin{proof}
Take a holomorphic function $h$ such that $\nu_h(z)=\max_{0\le i\le 2}\nu^\infty_{f_i}(z)\ \forall z\in\C,$ and 
consider the holomorphic curve $f$ from $\C$ into $\P^2(\C)$ with the reduced representation
$$ \tilde f=(hf_0,hf_1,hf_2).$$ 
Then, we have
$$W(\tilde f)=\det\left((hf_i)^{(j)};0\le i,j\le 2\right)=h^3\det\left(f_i^{(j)};0\le i,j\le 2\right)\not\equiv 0.$$
By the second main theorem in Nevanlinna theory, we have
\begin{align}\label{3.8}
\|\ T(r,f)\le\sum_{0\le i\le 2}N(r,\nu^0_{hf_i})+N(r,\nu^0_{\sum_{i=0}^2hf_i})-N(r,\nu^0_{W(\tilde f)})+o(T(r,f)).
\end{align}
Note that $\sum_{i=0}^2hf_i=h$. 

We set $\nu=\sum_{i=0}^2\nu^0_{hf_i}+\nu^0_h-\nu^0_{W(\tilde f)}$. For $z\in\C$, we consider the following cases.

Case 1: $z$ is not a pole of any $f_i$. Then we have $\nu^0_h(z)=0$ and hence
$$\nu (z)\le \sum_{0\le i\le 2}\nu^0_{f_i}(z)-\nu^0_{\det\left(f_i^{(j)};0\le i,j\le 2\right)}(z)\le\sum_{0\le i\le 2}\min\{2,\nu^0_{f_i}(z)\}.$$
 
Case 2: $z$ is a pole of some $f_i$. We suppose that $\nu^\infty_{f_0}(z)\ge\nu^\infty_{f_1}(z)\ge\nu^\infty_{f_2}(z)$. Then, we must have
$ \nu^\infty_{f_0}(z)=\nu^\infty_{f_1}(z)>0,$ and hence
\begin{align*}
\nu (z)&\le \nu^0_{hf_2}(z)+\nu^0_{h}(z)-\nu^0_{\det\left((hf_i)^{(j)};1\le i\le 3,0\le j\le 2\right)}(z)\ \ \ \ \ \text{ [where $f_3=1$]}\\
&\le \nu^0_{hf_2}(z)+\nu^0_{h}(z)-\min_{0\le s,t\le 2,s\ne t}\{\nu^0_{(hf_2)^{(s)}}(z)+\nu^0_{(hf_3)^{(t)}}(z)\}\\
&\le \nu^0_{hf_2}(z)+\nu^0_{h}(z)-\min_{0\le s,t\le 2,s\ne t}\{\max\{0,\nu^0_{hf_2}(z)-s\}+\max\{0,\nu^0_{h}(z)-t\}\}\\
&\le \max_{0\le s,t\le 2,s\ne t}\{\min\{\nu^0_{hf_2}(z),s\}+\min\{\nu^0_{h}(z),t\}\}\le 3.
\end{align*}
This implies that 
$$\nu(z)\le 3\min\{1,\sum_{0\le i\le 2}\nu^\infty_{f_i}(z)\}.$$

Then, we obtain
$$\nu\le \sum_{i=0}^2\min\left\{2,\nu^0_{f_i}\right\}+3\min\left\{1,\sum_{i=0}^2\nu^\infty_{f_i}\right\}.$$
This implies that
$$ N(r,\nu)\le\sum_{i=0}^2N^{[2]}(r,\nu^0_{f_i})+3\overline N\left(r,\sum_{i=0}^2\nu^\infty_{f_i}\right).$$
Combining this inequality with (\ref{3.8}), we get
\begin{align}\label{3.9}
\|\ T(r,f)\le \sum_{i=0}^2N^{[2]}(r,\nu^0_{f_i})+3\overline N(r,\sum_{i=0}^2\nu^\infty_{f_i})+o(\sum_{i=1}^nT(r,f_i)). 
\end{align}
Now, we take a meromorphic function $\varphi$ such that $\varphi hf_0,\varphi ahf_1,\varphi bhf_2$ are holomorphic functions without common zero and set $F=(\varphi hf_0,\varphi ahf_1,\varphi bhf_2)$. By the definition of the characteristic function and Jensen's formula, we have
\begin{align*}
\biggl\|\ T\left(r,\frac{f_0}{af_1+bf_2}\right)&\le\frac{1}{2\pi}\int_0^{2\pi}\log\|F(re^{i\theta})\|d\theta\\
&\le \frac{1}{2\pi}\int_0^{2\pi}\left(\log\|\tilde f(re^{i\theta})\|+\log^+|a|+\log^+|b|+\log |\varphi|\right)d\theta\\
&\le T(r,f)+m(r,a)+m(r,b)+N(r,\nu^0_\varphi)-N(r,\nu^\infty_\varphi)+O(1)\\
&=T(r,f)+O(T(r,a)+T(r,b))\\
&\le \sum_{i=0}^2N^{[2]}(r,\nu^0_{f_i})+3\overline N(r,\sum_{i=0}^2\nu^\infty_{f_i})\ \ \ \text{\small[by (\ref{3.9})]}\\
&+o(\sum_{i=1}^nT(r,f_i))+O(T(r,a)+T(r,b)).
\end{align*}
The lemma is proved.
\end{proof}
%_____________________________________________%
\section{Proof of Main Theorem}

\begin{proof}[Proof of Theorem \ref{1.1}]
By using a quasi-M\"obius transformation if necessary, without loss of generality, we may assume that $a_1=b_1=0,a_2=b_2=\infty, a_3=b_3=1$. Denote by $S$ the discrete subset of $\overline\C$ with $\|\ N(r,\nu_S)=o(T(r,f)+T(r,g))$ such that $E^S_{n_i,k)}(a_i,f)=E^S_{n_i,k)}(b_i,g)\ (1\le i\le 3)$. We set $h_1=\frac{g}{f},h_2=\frac{g-1}{f-1},f_1=ch_1,f_2=(1-c)h_2$ and $f_0=1-f_1-f_2$. Thus
$$f_0+f_1+f_2\equiv 1.$$ 
It follows from the supposition that none of $f_0,f_1,f_2$ is constant. We consider the following two cases.

If $f_0,f_1,f_2$ are linearly dependent over $\mathbb{C}$, then there exist three constants $c_0,c_1,c_2$ (not all zeros) such that
$$c_0f_0+c_1f_1+c_2f_2=0.$$
Obviously, $c_0\neq 0$, otherwise $\frac{h_2}{h_1}$ is a small function and hence $f$ will be a quasi-M\"{o}bius transformation $g$. Then we have
$$\left(1-\dfrac{c_1}{c_0}\right)f_1+\left(1-\dfrac{c_2}{c_0}\right)f_2\equiv 1.$$
Since both $f_1$ and $f_2$ are not constant so $c_0\ne c_1$ and $c_0\ne c_2$. Therefore, by Theorem \ref{2.1} we have
$$\|\ T(r,f_1)\le \sum_{i=1,2}\left(\overline N\left(r,\nu^0_{f_i}\right)+\overline N\left(r,\nu^\infty_{f_i}\right)\right)+S(r),$$
where $S(r)=o(T(r,f)+T(r,g))$. This implies that
$$\|\ T(r,h_1)\le 2\sum_{i=1,2}\left(\overline N\left(r,\nu^0_{h_i}\right)+\overline N\left(r,\nu^\infty_{h_i}\right)\right)+S(r).$$
In the same way, we have 
$$\|\ T(r,h_2)\le 2\sum_{i=1,2}\left(\overline N\left(r,\nu^0_{h_i}\right)+\overline N\left(r,\nu^\infty_{h_i}\right)\right)+S(r).$$
On the other hand, since $f=\frac{1-h_2}{h_1-h_2}$ and $g=\frac{1-h_2}{1-h_2/h_1}=\frac{(1/h_2)-1}{(1/h_2)-(1/h_1)}$, we have
\begin{align*}
&T(r,f)\le T(r,h_1)+T(r,h_2)+O(1) \\ 
\text{ and }\ & T(r,g)\le T\left(r,\frac{1}{h_1}\right)+T\left(r,\frac{1}{h_1}\right)+O(1)=T(r,h_1)+T(r,h_2)+O(1).
\end{align*}
Hence, 
\begin{align}\label{3.10}
\begin{split}
\|\ T(r,f)&+T(r,g)\le 8\sum_{i=1,2}\left(\overline N(r,\nu^0_{h_i})+\overline N(r,\nu^\infty_{h_i})\right)+S(r)\\ 
&\le 8\sum_{i=1}^3\overline N(r,|\nu^{a_i}_f-\nu^{b_i}_g|)+S(r)\\
&\le 8\sum_{i=1}^3\overline N_{(n_i+1,k)}(r,\nu^{a_i}_f)+8\sum_{i=1}^3\left(\overline N_{(k+1}(r,\nu^{a_i}_f)+\overline N_{(k+1}(r,\nu^{b_i}_g) \right)+S(r)\\
&\le \left(\frac{16\sum_{1\le i<j\le 3}(n_i+1)(n_j+1)}{\Delta(k+1)}+\frac{24}{k+1}\right)T(r)+S(r).
\end{split}
\end{align}
Here, we get the last inequality from  (\ref{3.5}). Letting $r\rightarrow +\infty$, we get 
$$ k+1\le 24+\frac{16\sum_{1\le i<j\le 3}(n_i+1)(n_j+1)}{\Delta}.$$
This is a contradiction. Therefore, $f_0,f_1,f_2$ must be linearly independent. 

(a) We prove the assertion (a) of the theorem. Since 
$$f-c=\frac{1-ch_1+(c-1)h_2}{h_1-h_2}=\frac{f_0}{\frac{1}{c}f_1+\frac{1}{c-1}f_2},$$ by Lemma \ref{3.7}, we have
\begin{align}\label{3.11}
\begin{split}
\|\ T(r,f-c)&\le\sum_{i=0}^{2}N^{[2]}\left(r,\nu^0_{f_i}\right)+3\overline N(r,\sum_{i=0}^2\nu^\infty_{f_i})+S(r)\\
&\leq N^{[2]}\left(r,\nu^0_{f_0}\right)+2\sum_{i=1,2}\overline N\left(r,\nu^0_{h_i}\right)+3\sum_{i=1,2}\overline N\left(r,\nu^\infty_{h_i}\right)+S(r)\\
&\leq N^{[2]}\left(r,\nu^0_{f_0}\right)+3\sum_{i=1}^3\overline N\left(r,|\nu^{a_i}_f-\nu^{b_i}_g|\right)+S(r).
\end{split}
\end{align}
We note that $f_0=(f-c)(h_1-h_2)$. Suppose that $z_0\ (z_0\not\in S)$ is a zero of $f_0$. We consider the following three cases.
\begin{itemize}
\item Suppose that $z_0$ is not a zero of $h_1-h_2$. Then we have
$$\min\{\nu^0_{f_0}(z_0),2\}\le \min\{\nu^0_{f-c}(z_0),2\}.$$
\item Suppose that $z_0$ is a zero of $h_1-h_2$ and is a pole of $c$. Then we have
$$\min\{\nu^0_{f_0}(z_0),2\}\le 2\min\{\nu^\infty_{c}(z_0),1\}.$$
\item Suppose that $z_0$ is a zero of $h_1-h_2$ and is not a pole of $c$. Then  $z_0$ is a zero of $1-h_2$ and hence a common 1-point of $h_1$ and $h_2$.
\end{itemize}
Therefore, from the above three cases we arrive that
\begin{align}
\label{3.12}
N^{[2]}\left(r,\nu^0_{f_0}\right)\le 2\overline N^{[2]}(r,\nu^0_{f-c})+2\overline N(r,1;h_1,h_2)+S(r).
\end{align}

Combining (\ref{3.11}) and (\ref{3.12}), we get
\begin{align}\label{3.13}
\begin{split}
\|\ N(r,\nu^0_{f-c})&\le T(r,f-c)+S(r)\\
&\le 2\overline N^{[2]}(r,\nu^0_{f-c})+2\overline N(r,1;h_1,h_2)+3\sum_{i=1}^3\overline N\left(r,|\nu^{a_i}_f-\nu^{b_i}_g|\right)+S(r).
\end{split}
\end{align}
We note that 
$$N(r,\nu^0_{f-c})=N_{2)}(r,\nu^0_{f-c})+N_{(3}(r,\nu^0_{f-c})\ge N^{[2]}(r,\nu^0_{f-c})+\frac{1}{3}N_{(3}(r,\nu^0_{f-c}).$$
Therefore, from (\ref{3.13}) we have
\begin{align}\label{3.14}
\|\  N_{(3}(r,\nu^0_{f-c})\le 6\overline N(r,1;h_1,h_2)+9\sum_{i=1}^3\overline N\left(r,|\nu^{a_i}_f-\nu^{b_i}_g|\right)+S(r).  
\end{align}

Suppose that there exist two integers $s$ and $t\ (|s|+|t|>0)$ such that $h_1^{s}=h_2^{t}$, i.e, $g^s(g-1)^t=f^s(f-1)^t$. If $s=0$ or $t=0$ or $s=-t$ then it is easy to see that $f$ and $g$ are M\"{o}bius transformation of each other. Therefore, we may assume that $s\ne 0,t\ne 0,s\ne -t$. 

For a point $z_0\in\C$, we note that:
\begin{itemize}
\item If $z_0$ is a common $\alpha$-point of $f$ and $g$  with $\alpha\in\{0,1,\infty\}$ then $\nu^\alpha_f(z_0)=\nu^\alpha_g(z_0)$ and hence $z_0$ is not a pole of $h_1$ or $h_2$. 

\item $f(z_0)\in\{0,1,\infty\}$ if and only if $g(z_0)\in\{0,1,\infty\}$ and in this case, we have $$\nu^{g(z_0)}_g(z_0)\le (|s|+|t|)\nu^{f(z_0)}_f(z_0).$$
\end{itemize}
Therefore, if $z_0$ is a zero of $f-c$ and also is a pole of $h_1-h_2$ then $z_0$ must be either a zero or an $1$-point of $f$ and one has $\nu^0_{f-c}(z_0)\le\nu^0_{c-f(z_0)}(z_0)$ or $\nu^0_{f-f(z_0)}(z_0)\le\nu^0_{c-f(z_0)}(z_0)$. The later inequality implies that
$$\nu^\infty_{h_1-h_2}(z_0)\le (1+|s|+|t|)\nu^{f(z_0)}_f(z_0)=(1+|s|+|t|)\nu^{0}_{f-f(z_0)}(z_0)\le (1+|s|+|t|)\nu^{0}_{c-f(z_0)}(z_0).$$
This implies that
$$\nu^0_{f-c,\ge 3}(z_0)\le \nu^0_{f_0,\ge 3}(z_0)+2(1+|s|+|t|)\nu^{0}_{c-f(z_0)}(z_0).$$
Of course, if $z_0$ is not a pole of $h_1-h_2$ then $$\nu^0_{f-c,\ge 3}(z_0)\le \nu^0_{f_0,\ge 3}(z_0).$$ 
Hence, we have
$$\nu^0_{f-c,\ge 3}\le \nu^0_{f_0,\ge 3}+2(1+|s|+|t|)\sum_{\alpha=0,1,\infty}\nu^{\alpha}_{c}.$$
This yields that
$$\|\ N_{(3}\left(r,\nu^0_{f-c}\right)\le N_{(3}\left(r,\nu^0_{f_0}\right)+S(r).$$
Similarly as (\ref{3.11}), we have 
\begin{align*}
\|\ N(r,\nu^0_{f_0})&\le T(r,f_0)\\
&\le N^{[2]}\left(r,\nu^0_{f_0}\right)+3\sum_{i=1}^3\overline N\left(r,|\nu^{a_i}_f-\nu^{b_i}_g|\right)+S(r)\\
&=N^{[2]}\left(r,\nu^0_{f_0}\right)+3\left(\frac{2\sum_{1\le i<j\le 3}(n_i+1)(n_j+1)}{\Delta(k+1)}+\frac{3}{k+1}\right)T(r) +S(r).
\end{align*}
This implies that $\|\ N_{(3}\left(r,\nu^0_{f_0}\right)\le 3\left(\frac{2\sum_{1\le i<j\le 3}(n_i+1)(n_j+1)}{\Delta(k+1)}+\frac{3}{k+1}\right)T(r) +S(r)$ and hence
$$\|\ N_{(3}\left(r,\nu^0_{f-c}\right)\le N_{(3}\left(r,\nu^0_{f_0}\right)\le 3\left(\frac{2\sum_{1\le i<j\le 3}(n_i+1)(n_j+1)}{\Delta(k+1)}+\frac{3}{k+1}\right)T(r) +S(r).$$
Therefore, we get the assertion (a) in this case.

Now, we consider the remaining case where $h_1^{s}\not\equiv h_2^{t}$ for every integers $s,t\ (|t|+|s|>0)$. Note that $T(r,h_i)\le T(r)\ (i=1,2)$ and
\begin{align}\label{3.16}
\begin{split}
\sum_{i=1}^2(\overline N(r,\nu^0_{h_i})&+\overline N(r,\nu^\infty_{h_i})\le 2\sum_{i=1}^3\overline N(r,|\nu^{a_i}_f-\nu^{b_i}_g|)+S(r),\\ 
\text{\small{[similar as (\ref{3.10})]}}\ \ &\le 2\left(\frac{2\sum_{1\le i<j\le 3}(n_i+1)(n_j+1)}{\Delta(k+1)}+\frac{3}{k+1}\right)T(r)+S(r)\\
\end{split}
\end{align}
Then, by (\ref{3.14}-\ref{3.16}) and Lemma \ref{2.4}, for every positive integer $n\ge 2$, we have
\begin{align*}
\|\ N_{(3}\left(r,\nu^0_{f-c}\right)&\le\left((9+6\delta)\left(\frac{2\sum_{1\le i<j\le 3}(n_i+1)(n_j+1)}{\Delta(k+1)}+\frac{3}{k+1}\right)+\frac{6}{n+2}\right)T(r)+S(r),
\end{align*}
where $\delta=2^{n^2+2n+1}+n^4+4n^3+n^2-6n-2.$ Thus,
\begin{align*}
\|\ N_{(3}\left(r,\nu^0_{f-c}\right)&\le\left(2^{n^2+2n+4}\left(\frac{2\sum_{1\le i<j\le 3}(n_i+1)(n_j+1)}{\Delta(k+1)}+\frac{3}{k+1}\right)+\frac{6}{n+2}\right)T(r)+S(r)\\
&=\left(\frac{2\lambda_n}{k+1}+\frac{6}{n+2}\right)T(r)+S(r).
\end{align*}
The assertion (a) of the theorem is proved.

(b) We prove the assertion (b) of the theorem. It is deduced from (\ref{3.11}) and (\ref{3.12}) that
\begin{align}\label{3.17}
T(r,f)\le N\left(r,\nu^0_{f-c}\right)+2\overline N(r,1;h_1,h_2)+3\sum_{i=1}^3\overline N\left(r,|\nu^{a_i}_f-\nu^{b_i}_g|\right)+S(r).
\end{align}
By Lemma \ref{2.4} and similarly as (\ref{3.16}), we have
\begin{align*}
\|\ &2\overline N(r,1;h_1,h_2)+3\sum_{i=1}^3\overline N\left(r,|\nu^{a_i}_f-\nu^{b_i}_g|\right)\\
&\le\left((3+2\delta)\left(\frac{2\sum_{1\le i<j\le 3}(n_i+1)(n_j+1)}{\Delta(k+1)}+\frac{3}{k+1}\right)+\frac{2}{n+2}\right)T(r)+S(r)\\
&\le \left(2^{n^2+2n+3}\left(\frac{2\sum_{1\le i<j\le 3}(n_i+1)(n_j+1)}{\Delta(k+1)}+\frac{3}{k+1}\right)+\frac{2}{n+2}\right)T(r)+S(r)\\
&=\left(\frac{\lambda_n}{k+1}+\frac{2}{n+2}\right)T(r)+S(r).
\end{align*}
Combining the above inequality and (\ref{3.17}), we have
$$\|\ N\left(r,\nu^0_{f-c}\right)\ge T(r,f)-\left(\frac{\lambda_n}{k+1}+\frac{2}{n+2}\right)T(r)+S(r).$$
The assertion (b) is proved.
\end{proof}

\begin{proof}[Proof of Theorem \ref{1.2}]
By using a quasi-M\"{o}bius transformation if necessary, we may suppose that $a_1=b_1=0,a_2=b_2=\infty,a_3=b_3=1,a_4=a,b_4=b$ and $a,b\not\in\{0,1,\infty\}$. Suppose by contrary that $f$ is not a quasi-M\"{o}bius transformation of $g$. We set $T(r)=T(r,f)+T(r,g)$ and $S(r)=S(r,f)+S(r,g).$ 
We define divisors 
$$\mu_i(z)=\min\{1,|\nu^0_{f-a_i}(z)-\nu^0_{g-b_i}(z)|\}\text{ for }i=1,2,3\text{ and }\mu_4(z)=|\nu^0_{f-a_4}(z)-\nu^0_{g-b_4}(z)|.$$
By the supposition and by (\ref{3.5}), we have
\begin{align*}
\|\ \Delta\sum_{i=1}^3N(r,\mu_i)&\le\Delta\sum_{i=1}^3\left(\overline N_{(n_i+1,k)}(r,\nu^{a_i}_f)+\overline N_{(k+1}(r,\nu^{a_i}_f)+\overline N_{(k+1}(r,\nu^{b_i}_g)\right)+S(r)\\
&\le \frac{2\sum_{1\le i<j\le 3}(n_i+1)(n_j+1)+3\Delta}{k+1}T(r)+S(r).
\end{align*}
Also, by the main theorem, we have
$$\|\ N(r,\mu_4)\le N_{(3}(r,\nu^{a_4}_f)+N_{(3}(r,\nu^{b_4}_g)+S(r)\le 2\left(\frac{2\lambda_n}{k+1}+\frac{6}{n+2}\right)T(r)+S(r),$$
where
$$\lambda_n=2^{n^2+2n+3}\left(\frac{2\sum_{1\le i<j\le 3}(n_i+1)(n_j+1)}{\Delta(k+1)}+\frac{3}{k+1}\right)$$
for a positive integer $n$ (chosen later). Combining the above two inequalities, we obtain
\begin{align}\label{4.2}
\begin{split}
4&\sum_{i=1}^3N(r,\mu_i)+N(r,\mu_4)\\
&\le \left(\frac{8\sum_{1\le i<j\le 3}(n_i+1)(n_j+1)+12\Delta}{\Delta(k+1)}+\frac{4\lambda_n}{k+1}+\frac{12}{n+2}\right)T(r)+S(r).
\end{split}
\end{align}
We write $f=\frac{f_0}{f_1}$ (resp. $g=\frac{g_0}{g_1}$), where $f_0,f_1$ (resp. $g_0,g_1$) are holomorphic function without common zeros. Take meromorphic functions $h_1,h_2,h_3,h_4$ such that
$$f_0=h_1 g_0;f_1=h_2 g_1;f_0-f_1=h_3(g_0-g_1);f_0-af_1=h_4(g_0-bg_1).$$
It is easy to see that 
$$\nu^0_{h_i}+\nu^\infty_{h_i}=|\nu^{a_i}_f-\nu^{b_i}_g|\ (1\le i\le 4)$$
outside a discrete subset $S$ of $\C$ with $\|\ N(r,\nu_S)=S(r)$.
From the definition of functions $h_i\ (1\le i\le 4)$, we have 
\begin{align*}
\mathrm{det}\left (
\begin{array}{cccc}
1&0&-h_1&0\\ 
0&1&0&-h_2\\
1&-1&-h_3&h_3\\
1&-a&-h_4&bh_4
\end{array}\right )=0.
\end{align*}
Then
\begin{align*}
%\label{3.12}
(1-a)h_1h_2-h_1h_3+bh_1h_4+a h_2h_3-h_2h_4+(1-b) h_3h_4=0.
\end{align*}
We define the following functions:
\begin{align*}
h_{1,2}&=(1-a)h_1h_2,\ h_{1,3}=-h_1h_3,\ h_{1,4}=b h_1h_4,\\
h_{2,3}&=a h_2h_3,\ h_{2,4}=-h_2h_4,\ h_{3,4}=(1-b) h_3h_4.
\end{align*}
Then we have 
$$ \sum_{1\le i<j\le 4}h_{i,j}=0. $$
Let $d$ be a meromorphic function on $\C$ such that $dh_{i,j}\ (1\le i<j\le 4)$ are all holomorphic functions on $\C$ without common zero. Then it is easy to see that
\begin{align*}
 \sum_{1\le i<j\le 4}N^{[4]}(r,dh_{i,j})\le& 3\sum_{i=1}^4(N^{[4]}(r,\nu^0_{h_i})+N^{[4]}(r,\nu^\infty_{h_i}))+S(r)\\ 
\le& 12\sum_{i=1}^3N(r,\mu_i)+3N(r,\mu_4)+S(r).
\end{align*}
Let $\mathcal I=\{(i,j)|1\le i<j\le 4\}$. For $I=(i,j)\in\mathcal I$, we denote $h_{i,j}$ by $h_{I}$.
Take $I_0\in\mathcal I$ arbitrarily. Then 
$$dh_{I_0}=-\sum_{I\ne I_0}dh_I.$$
Denote by $t$ the minimum number satisfying the following: There exist $t$ elements $I_1$,..., $I_t \in\mathcal I$ and $t$ nonzero constants $b_v\in\C\ (1\le v\le t)$ such that $dh_{I_0}=\sum_{v=1}^tb_vdh_{I_v}.$

By the minimality of $t$, the family $\{dh_{I_1},...,dh_{I_t}\}$ is linearly independent over $\C$.

Suppose that $t\ge 2$. Consider the linearly non-degenerate holomorphic mapping $h:\C \to \P^{t-1}(\C)$ with the representation $h=(d h_{I_1}:...:d h_{I_t})$. By the second main theorem in Nevanlinna theory for hyperplanes, we have
\begin{align}\label{4.4}
\begin{split}
\|\ T_h(r)&\le \sum_{v=1}^t N^{[t-1]}_{dh_{I_v}}(r)+N_{dh_{I_0}}^{[t-1]}(r)+S(r)\\
&\le \sum_{v=1}^6 N^{[4]}_{dh_{I_v}}(r)+S(r)\ \  [\text{since $t\le 5$}]\\
&\le 12\sum_{i=1}^3N(r,\mu_i)+3N(r,\mu_4)+S(r).
\end{split}
\end{align}
On the other hand, there exists two element, say $I_i$ and $I_j$, such that $4\not\in (I_i\setminus I_j)\cup (I_j\setminus I_i)$. Therefore, by the main theorem we have
\begin{align}\label{4.5}
\begin{split}
\|\ T_h(r)&\ge T(r,\frac{h_{I_i}}{h_{I_j}})+S(r)=T(r,\frac{h_{I_i}}{h_{I_j}}-\frac{\tilde h_{I_i}}{\tilde h_{I_j}})+S(r)\\
&\ge N(r,\min\{\nu^0_{f-a},\nu^0_{g-b}\})+S(r)\\
&\ge \frac{1}{2}\left(N(r,\nu^a_{f})+N(r,\nu^a_{f})-N_{(3}(r,\nu^a_{f})-N_{(3}(r,\nu^a_{f})\right)+S(r)\\
&\ge \frac{1}{2}\left(1-\frac{6\lambda_n}{k+1}-\frac{16}{n+2}\right)T(r)+S(r),
\end{split}
\end{align}
where $\tilde h_{I_i},\tilde h_{I_j}$ are obtained from $h_{I_i},h_{I_j}$ by substituting $a,b$ into $f,g$ respectively.
From (\ref{4.2}), (\ref{4.4}) and (\ref{4.5}), we have
\begin{align*}
&\biggl\|\  \frac{1}{2}\left(1-\frac{6\lambda_n}{k+1}-\frac{16}{n+2}\right)T(r)\\ 
& \le 3\left(\frac{8\sum_{1\le i<j\le 3}(n_i+1)(n_j+1)+12\Delta}{\Delta(k+1)}+\frac{4\lambda_n}{k+1}+\frac{12}{n+2}\right)T(r)+S(r).
\end{align*}
Letting $r\longrightarrow +\infty$, we get 
$$1\le \frac{48\sum_{1\le i<j\le 3}(n_i+1)(n_j+1)+72\Delta}{\Delta(k+1)}+\frac{30\lambda_n}{k+1}+\frac{88}{n+2}.$$
This is a contradiction.

Therefore, we must have $t=1$, i.e., there is $I_1\in \mathcal {I}\setminus \{I_0\}$ such that  $\dfrac {h_{I_0}}{h_{I_1}}\in\C\setminus\{0\}$. Hence, for every $I\in\mathcal J$, there exists $J\in\mathcal I\setminus\{I\}$ such that $\dfrac{h_I}{h_J}\in\C\setminus\{0\}$. We consider the following three cases:

\textbf{Case a:} There are $I=(s,t), J=(s,\ell)$ such that $\dfrac{h_I}{h_J}\in\C\setminus\{0\}$. Then $h_t=\alpha h_\ell$ with a small meromorphic function (w.r.t $f$ and $g$) $\alpha$. Therefore, $f$ is a quasi-M\"{o}bius  transformation of $g$. This contradicts the supposition.

\textbf{Case b:} There are $I=(t,s), J=(\ell,s)$ such that $\dfrac{h_I}{h_J}\in\C\setminus\{0\}$.  Similarly as the above case, $f$ is a quasi-M\"{o}bius  transformation of $g$ and we arrive at the contradiction. 

\textbf{Case c:} There are small functions $\alpha,\beta$ such that $h_{1,2}=\alpha h_{3,4}\text{ and } h_{1,3}=\beta h_{2,4}.$
Then $\left(\dfrac{h_1}{h_4}\right)^{2}$ is a small function, and so is $\dfrac{h_1}{h_4}$. Hence $f$ is a quasi-M\"{o}bius transformation of $g$. This is a contradiction.

From the above two cases, we always get a contradiction. Hence $f$ is a quasi-M\"{o}bius transformation of $g$. 
\end{proof}

\section*{Data availability}
Data sharing is not applicable to this article as no datasets were generated or analyzed during the current study.

\section*{Disclosure statement}
No potential conflict of interest was reported by the author(s).

\section*{Funding Acknowledgements}
No funding was received for this research.


\begin{thebibliography}{99}

\bibitem{QA} N. V. An and S. D. Quang, \textit{Two meromorphic functions sharing four pairs of small functions}, Bull. Korean Math. Soc. \textbf{54} (2017) 1159--1171.

\bibitem{CC} H. Z. Cao and T. B. Cao, \textit{Two meromorphic functions share some pairs of small functions with truncated multiplicities}, Acta Math. Sci. \textbf{34} (2014) 1854-1864.

\bibitem{CG} T. Czubiak and G. Gundersen, \textit{Meromorphic functions that share pairs of values}, Complex Var. Elliptic Equ. \textbf{34} (1997), 35-46.
\bibitem{LY95} P. Li and C. C. Yang, \textit{Some further results on the unique range set of meromorphic functions}, Kodai Math. J., \textbf{18} (1995), 437--450.

\bibitem{LY1} P. Li and C. C. Yang, \textit{On two meromorphic functions that share pairs of small functions}, Complex Var. Elliptic Equ. \textbf{32} (1997), 177-190.

\bibitem{LY2} P. Li and C. C. Yang, \textit{Meromorphic functions that share some pair of small functions}, Kodai Math. J. \textbf{32} (2009), 130--145.

\bibitem{LZ} P. Li and Y. Zhang, \textit{Meromorphic functions whose derivatives share small functions}, Kodai Math. J. \textbf{27} (2004), 261--271.

\bibitem{N} J. Noguchi, \textit{A note on entire pseudo holomorphic curves and the proof of Cartan-Nochka's theorem}, Kodai Math. J. \textbf{28} (2005), 336-346.

\bibitem{NO} J. Noguchi and T. Ochiai, \textit{Introduction to Geometric Function Theory in Several Complex Variables}, Trans. Math. Monogr. 80, Amer. Math. Soc., Providence, Rhode Island, 1990.

\bibitem{Q12} S. D. Quang, \textit{Two meromorphic functions share some pairs of small functions,} Compl. Anal. Oper. Th. \textbf{7} (2013), 1357-1370.

\bibitem{QQ} S. D. Quang and L. N. Quynh, \textit{Two meromorphic functions sharing some pairs of small functions regardless of multiplicities}, Internat. J. Math. \textbf{25} (2014) 1450014 (16 pages).

\bibitem{QK} S. D. Quang and N. X. Ky, \textit{On the characteristic functions of meromorphic functions sub-weighted sharing three values}, J. Math. Anal. Appl. Volume \textbf{538}, Issue 1, (2024) 128356.

%\bibitem{QP} S. D. Quang and P. Nguyen, \textit{Meromorphic functions weakly sharing four pairs of small functions with bi-weights}, Preprint (2024).

\bibitem{Y} K. Yamanoi, \textit{The second main theorem for small functions and related problems}, Acta Math. \textbf{192} (2004), 225-294.

\bibitem{ZY} J. Zhang and L. Yang, \textit{Meromorphic functions sharing pairs of small functions}, Math. Slovaca \textbf{65} (2015), 93-102.
\end{thebibliography}
\end{document}